\documentclass[12pt]{amsproc}
\usepackage{geometry} 
\usepackage{amsmath,amssymb,amsthm}
\geometry{a4paper,left=2.54cm, right=2.54cm, top=2.54cm, bottom=2.54cm} 
\usepackage[english]{babel}
\usepackage{tikz}
\usepackage{makecell}
\usepackage{graphicx}
\usepackage{setspace}
\usepackage{array}
\usepackage{multirow}
\usepackage{setspace} 
\usepackage{fancyhdr} 
\usepackage{hyperref}
\hypersetup{hypertex=true,
colorlinks=true,
linkcolor=blue,
anchorcolor=blue,
citecolor=blue}

\title{Sharp pointwise convergence of Schrödinger operator with complex time along curves} 
\author{Binyu Wang, Zhichao Wang}

\newtheorem{theorem}{Theorem}[section]

\newtheorem{proposition}[theorem]{Proposition}

\begin{document}
	\maketitle
\pagestyle{fancy} % 启用 fancy 样式
\fancyhf{} % 清除默认的页眉页尾
\fancyhead[C]{Convergence of Schrödinger operator with complex time along curves }
\fancyhead[R]{\thepage}
	\let\thefootnote\relax\footnotetext{
	This work is supported  in part by the National Natural Science Foundation of China (No.12371100 and No. 12171424)\\	2020 Mathematics Subject Classification: 42B25.\\
		\textit{Key words and phrases}: Schrödinger operator, Maximal functions, Complex time, Pointwise convergence, Along curves.}
	\begin{abstract}
		
		In this paper, we study the almost everywhere convergence results of Schrödinger operator with complex time along curves. We also consider the fractional cases. All results are sharp up to the endpoints.
	\end{abstract}
%%%%%%%%%%%%%one%%%%%%%%%%%%%%%%%	
\section{Introduction}
\numberwithin{equation}{section}

The solution to the Schrödinger equation
\begin{equation}\label{0}
	\left\{\begin{array}{ll}
		i u_{t}-\Delta u=0 & (x, t) \in \mathbb{R} \times \mathbb{R}, \\
		u(x, 0)=f(x) & x \in \mathbb{R},
	\end{array}\right.
\end{equation}
can be formally written as
\begin{equation}\label{1}
	e^{i t (-\Delta)} f(x)=\frac{1}{2\pi}\int_{ \mathbb{R}} e^{i\left(x  \xi+t|\xi|^{2}\right)} \widehat{f}(\xi) d \xi.
\end{equation}
Investigating the optimal value   \( s \)  for $f\in H^s\left(\mathbb{R}\right)$ to ensure \( \underset{t \rightarrow 0} {\lim} e^{i t (-\Delta)} f(x)=f(x) \quad a.e. \)  was first put forward by Carleson \cite{MR0576038}, and he proved convergence holding for \( s \geq \frac{1}{4} \), which is optimal according to the counterexample constructed by Dahlberg and Kenig \cite{MR0654188}.

Lee-Rogers \cite{MR2871144} studied the following associated operator
\begin{equation}\label{2}
	U_\Gamma f(x,t)=\frac{1}{2\pi}\int_{ \mathbb{R}}  e^{i\left(\Gamma(x,t)  \xi+t|\xi|^{2}\right)} \widehat{f}(\xi) d \xi.
\end{equation}
Here, \( \Gamma \) is a continuous function such that
\(
\Gamma: \mathbb{R} \times[-1,1] \rightarrow \mathbb{R}, \quad \Gamma(x, 0)=x.
\)
We always assume that the curve $\Gamma(x,t)$ is bilipschitz continuous in \( x \), that is
\begin{equation}\label{3}
	C_{1}\left|x-x^{\prime}\right| \leq\left|\Gamma(x, t)-\Gamma\left(x^{\prime}, t\right)\right| \leq C_{2}\left|x-x^{\prime}\right|, \quad t \in[-1,1], \quad x, x^{\prime} \in \mathbb{R} .
\end{equation}
The curve will be divided into tangential case $(0<\alpha<1)$ and non-tangential case $(\alpha=1)$ according to the Hölder continuous condition \( \alpha \in(0,1] \) in \( t \) , that is
\begin{equation}\label{4}
	\left|\Gamma(x, t)-\Gamma\left(x, t^{\prime}\right)\right| \leq C_{3}\left|t-t^{\prime}\right|^{\alpha}, \quad x \in \mathbb{R}, \quad t, t^{\prime} \in[-1,1].
\end{equation}
	
	When $\Gamma(x,t)$ is bilipschitz  continuous in in \( x \) and Lipschitz  continuous in \( t \), the convergence problem of  (\ref{2}) is essentially equivalent to the traditional convergence problem  of (\ref{1}). In the tangential case, Cho-Lee-Vargas \cite{MR2970037} showed \( s>\max \left\{\frac{1}{4}, \frac{1-2 \alpha}{2}\right\} \) is the sharp sufficient condition.  
	
 Sjölin \cite{MR2549801} first raised   the almost everywhere convergence problem of Schrödinger operator with complex time by replacing $t$ with $t+it^{\gamma}$ in (\ref{1}), that is
 \[
 	P_{\gamma} f(x,t):=\frac{1}{2\pi}\int_{\mathbb{R}} e^{i\left(x \xi+ t|\xi|^{2}\right)} e^{-t^{\gamma}|\xi|^{2}}\hat{f}(\xi) d \xi,
 \] where $\gamma>0$.  And he showed that when $0<\gamma \le 1$, $f\in L^2$ is optimal.  Bailey \cite{MR3047427} pointed that $s>\min\left\{\frac{1}{4},\frac{1}{2}\left(1-\frac{1}{\gamma}\right)^+ \right\}$ is the sharp result.  In addition, some related problems  have also been studied. For example,  Pan--Sun \cite{MR4844992,Pan-Sun}  investigated sequential convergence and  convergence rate of Schrödinger operator with complex time, which were  improved to sharp by Chen--Li--Wang--Wang \cite{Chen-Li-Wang-Wang}.
 
In this paper, we investigate the convergence properties of Schrödinger operator with complex time along curves
\begin{equation}\label{5}
	P_{\gamma} f(\Gamma(x,t),t):=\frac{1}{2\pi}\int_{\mathbb{R}} e^{i\left(\Gamma(x,t)  \xi+ t|\xi|^{2}\right)} e^{-t^{\gamma}|\xi|^{2}}\hat{f}(\xi) d \xi.
\end{equation}
Niu-Xue \cite{MR4052710} proved $s>\min\left\{\frac{1}{2}\left(1-\frac{1}{\gamma}\right)^+ ,\frac{1}{4}\right\}$ is sharp  for $\alpha\in[\frac{1}{2},1]$. We consider the case $\alpha\in(0,\frac{1}{2})$	and simplify the proof of case $\alpha\in[\frac{1}{2},1]$. Here are the main results of our article.

\subsection{Pointwise convergence of Schrödinger operator with complex time along curves.}
\numberwithin{equation}{section}
\begin{theorem}\label{theorem1}(\cite{MR4052710})
	Let $\gamma\in(0,\infty)$. For any curve  $\Gamma(x,t): \mathbb{R}\times[-1,1] \rightarrow \mathbb{R} $ which is bilipschitz continuous in \( x \) and  $\alpha$-Hölder continuous in \( t \) with \(  \frac{1}{2}\leq \alpha \leq 1\),
	we have
	\begin{equation}\label{pointwise convergence 1}
		\lim _{t \rightarrow 0} P_{\gamma} f(\Gamma(x,t),t)=f(x) \text { a.e. } x \in \mathbb{R} ,\quad \forall f \in H^{s}\left(\mathbb{R}\right)
	\end{equation}
	whenever  $ s>s(\gamma)=\min\{\frac{1}{2}(1-\frac{1} {\gamma})^{+}, \frac{1}{4}\}$. Conversely, there exists a curve satisfying the previous assumptions, while (\ref{pointwise convergence 1}) fails whenever $s<s(\gamma)$. 
	
\end{theorem}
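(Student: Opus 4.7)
The plan is to reduce (\ref{pointwise convergence 1}) to the maximal function estimate
\[
\Bigl\| \sup_{0<t<1} |P_\gamma f(\Gamma(\cdot,t),t)| \Bigr\|_{L^2(B(0,R))} \lesssim_R \|f\|_{H^s(\mathbb{R})}, \quad s>s(\gamma),
\]
for every $R>0$. Combined with the obvious pointwise convergence on the dense subspace $\mathcal{S}(\mathbb{R})\subset H^s$, this implies convergence almost everywhere by Stein's maximal principle. Sharpness is then obtained via two complementary counterexamples on admissible curves.

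\textbf{Sufficiency.} First I Littlewood--Paley decompose $f=\sum_{k\ge 0}f_k$ with $\widehat{f_k}$ supported in $\{|\xi|\sim 2^k\}$, so that $\|f_k\|_{L^2}\lesssim 2^{-sk}\|f\|_{H^s}$. The task reduces to bounding the $L^2(B(0,R))$-norm of the maximal operator applied to each $f_k$ by $C\cdot 2^{(s(\gamma)-s+\varepsilon)k}\|f_k\|_{L^2}$ for some $\varepsilon>0$. The key object is the critical time scale $t_k:=2^{-2k/\gamma}$ at which the heat damping factor $e^{-t^\gamma|\xi|^2}$ transitions from $\approx 1$ to $\ll 1$, and I split
\[
\sup_{0<t<1}=\sup_{0<t<t_k}+\sup_{t_k\le t<1}.
\]
On the large-$t$ regime the pointwise bound $|P_\gamma f_k(\Gamma,t)|\lesssim 2^{k/2}e^{-ct^\gamma 2^{2k}}\|f_k\|_{L^2}$ combined with an $L^2$-in-$t$ estimate and a Sobolev-type embedding in the time variable produces, after summing dyadically, the threshold $s>\tfrac{1}{2}(1-1/\gamma)^+$. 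On the complementary small-$t$ regime the heat factor is essentially $1$, and I invoke the Lee--Rogers estimate \cite{MR2871144} for the Schrödinger-along-curves maximal operator with Hölder exponent $\alpha\ge 1/2$, which furnishes the Carleson bound $2^{k/4}\|f_k\|_{L^2}$. Taking the minimum of the two regimes yields $s(\gamma)=\min\{\tfrac{1}{2}(1-1/\gamma)^+,\tfrac{1}{4}\}$.

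\textbf{Necessity.} For sharpness I construct two separate counterexamples on admissible curves. To rule out $s<\tfrac{1}{4}$ (binding when $\gamma\ge 2$), I adapt the Dahlberg--Kenig construction: take $\widehat f$ a smooth bump of unit scale around a large frequency $N$ and evaluate at time $t\sim N^{-2}$, at which the heat factor $e^{-t^\gamma|\xi|^2}$ remains of unit order while the classical stationary-phase obstruction for the Schrödinger operator persists. To rule out $s<\tfrac{1}{2}(1-1/\gamma)$ (binding when $1<\gamma<2$), I take $\widehat f=\chi_{\{|\xi|\sim N\}}$ and evaluate at the scale $t=N^{-2/\gamma}$ where the heat factor is again of unit order; a direct computation of $P_\gamma f(\Gamma(x,t),t)$ shows a lower bound of the form $N^{1-1/\gamma}$ on a set of positive measure, while $\|f\|_{H^s}\sim N^{s+1/2}$, producing divergence whenever $s<\tfrac{1}{2}(1-1/\gamma)$.

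\textbf{Main obstacle.} The most delicate point is the small-$t$ maximal estimate on curves of Hölder regularity $\alpha<1$: a direct time derivative would fall on $\partial_t\Gamma$, which does not exist classically. My plan is to replace $\partial_t$ by a finite-difference / Littlewood--Paley decomposition in $t$, where the hypothesis $|\Gamma(x,t)-\Gamma(x,t')|\lesssim|t-t'|^\alpha$ supplies a gain of $|t-t'|^\alpha$ in place of $|t-t'|$. The restriction $\alpha\ge 1/2$ appears here naturally: the oscillation factor $\xi^2$ generated by differentiating the Schrödinger phase interpolates against this Hölder gain, and only for $\alpha\ge 1/2$ does the interpolation close at the endpoint $s=1/4$. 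The simplification over \cite{MR4052710} is expected to come from packaging the heat decay and the Lee--Rogers bound into a single dyadic frequency/time decomposition, instead of analyzing an explicit $TT^*$ kernel.
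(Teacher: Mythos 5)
Your high-level scaffolding (reduction to a maximal estimate via Stein's principle, Littlewood--Paley decomposition in frequency, two counterexample families) matches the paper, but the core of your sufficiency argument is genuinely different and, as written, contains a gap that loses the theorem's main new content in the range $1\le\gamma<2$.

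The paper does not split time. It linearizes the maximal operator, forms the single $TT^*$ kernel
\[
K(x,y)=\int e^{i\left((\Gamma(x,t(x))-\Gamma(y,t(y)))\xi+(t(x)-t(y))\xi^2\right)}e^{-(t(x)^\gamma+t(y)^\gamma)\xi^2}\Psi(\xi/\lambda)\,d\xi,
\]
and proves one pointwise bound for it depending on two free parameters $\beta_1,\beta_2\ge 0$ that record how aggressively one spends the heat decay $e^{-\lambda^2|t(x)-t(y)|^\gamma}$ against van der Corput. Schur's test then gives the three regimes in $\gamma$ by optimizing $(\beta_1,\beta_2)$. The Hölder regularity of $\Gamma$ enters only through the elementary split $|x-y|\lesssim|t(x)-t(y)|^\alpha$ versus the opposite inequality; no $t$-derivative of $\Gamma$ is ever taken.

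Your proposal instead splits time at $t_k\sim 2^{-2k/\gamma}$ and intends to black-box the Lee--Rogers maximal bound on $[0,t_k]$. This is where the argument breaks. Lee--Rogers gives $\|\sup_{0<t<1}|U_\Gamma f_k|\|_{L^2}\lesssim 2^{k/4}\|f_k\|_2$ and, applied as a black box on the shorter interval $[0,t_k]\subset[0,1]$, still only gives $2^{k/4}$, never less. On $[0,t_k]$ the heat factor is $\asymp 1$ and contributes no gain whatsoever, so your small-$t$ regime yields only the threshold $s>1/4$. Meanwhile the theorem asserts $s(\gamma)=\tfrac12(1-1/\gamma)<1/4$ for $1\le\gamma<2$. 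Since both regimes must be bounded, the correct combination rule is a \emph{maximum}, not the ``minimum'' you invoke at the end of the sufficiency paragraph; with your two inputs you would get $s>1/4$ for all $\gamma\ge 1$ and miss the improvement entirely. To recover $\tfrac12(1-1/\gamma)$ on $[0,t_k]$ one needs a genuinely $T$-dependent strengthening of Lee--Rogers of the form $\|\sup_{0<t<T}|U_\Gamma f_k|\|_{L^2}\lesssim(1+2^k T^{1/2})^{1/2}\|f_k\|_2$ for $\alpha\ge 1/2$; this follows from exactly the same van der Corput / non-stationary phase kernel analysis the paper runs (the restriction $|t(x)-t(y)|\le T$ shrinks the $|x-y|$-support of the kernel to $\lesssim 2^kT$), but it is not available as a black box, and once you prove it you have essentially reproduced the paper's $TT^*$ estimate on that piece. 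Your large-$t$ regime has the companion problem: the pointwise bound $2^{k/2}e^{-ct^\gamma 2^{2k}}\|f_k\|_2$, taken at $t=t_k$ where the exponential is $\asymp 1$, only gives $2^{k/2}$, and the proposed repair via $\partial_t$ and Sobolev-in-$t$ both runs into the nonexistence of $\partial_t\Gamma$ (which you acknowledge but do not resolve) and, on a naive derivative count, produces a threshold no better than $1/2$.

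On necessity, your sketch is close in spirit to the paper but omits the decisive device: the paper modulates the bump to a high frequency $-R^b$ (with $b=\gamma$ for $\gamma\in[1,2)$, $b=2$ for $\gamma\ge 2$), which shifts the curve term to $x-t^\alpha-2R^b t$ and lets one solve $x=t^\alpha+2R^b t$ for $t_x\in(0,cR^{-2})$ on a set of $x$ of size $\sim R^{\gamma-2}$ (resp. $\sim 1$). Taking $\widehat f=\chi_{\{|\xi|\sim N\}}$ and $t=N^{-2/\gamma}$ without this modulation does not obviously kill the linear-in-$\xi$ phase $(x-t^\alpha)\xi$, so the lower bound is not immediate; the paper's modulation and the explicit choice of $t_x$ are what make $\phi_R$ and $\Phi_R$ simultaneously small.
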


\begin{theorem}\label{theorem2}
	Let $\gamma\in(0,\infty)$. For any curve  $\Gamma(x,t) $ which is bilipschitz continuous in \( x \) and   $\alpha$-Hölder continuous in \( t \) with $ 0 < \alpha \leq \frac{1}{4} $, we have convergence result 
	(\ref{pointwise convergence 1}) whenever  $ s>s(\gamma)=\min\{(\frac{1}{2 }-\frac{\alpha} {\gamma})^{+}, 	\frac{1}{2}-\alpha \}$. Conversely, there exists a curve satisfying the previous assumptions, while (\ref{pointwise convergence 1}) fails whenever $s<s(\gamma)$. 
\end{theorem}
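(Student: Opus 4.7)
By the standard density argument and a Nikishin--Stein maximal principle, the a.e.\ convergence in (\ref{pointwise convergence 1}) is equivalent to the local maximal estimate
\[
\bigl\|P_\gamma^* f\bigr\|_{L^2(B)} \leq C_B \|f\|_{H^s}, \qquad P_\gamma^* f(x) := \sup_{0<t<1}\bigl|P_\gamma f(\Gamma(x,t),t)\bigr|,
\]
for every ball $B\subset\mathbb{R}$ and every $s>s(\gamma)$. A Littlewood--Paley decomposition $f=\sum_{k\ge 0}f_k$ with $\mathrm{supp}(\widehat{f_k})\subset\{|\xi|\sim N\}$, $N=2^k$, reduces matters to the single-scale bound
\[
\bigl\|P_\gamma^* f_N\bigr\|_{L^2(B)}\lesssim N^{s(\gamma)+\epsilon}\|f_N\|_{L^2},
\]
with a uniform $\epsilon$-gain permitting summation over $k$.

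\textbf{Main estimate.} The plan is to exploit two sources of smallness in parallel: the dissipation $e^{-t^\gamma|\xi|^2}$ and the $\alpha$-Hölder regularity of $\Gamma$ in $t$. I would split the time interval at the dissipation threshold $T_N:=N^{-2/\gamma}$. On $[T_N,1]$ the factor $e^{-ct^\gamma N^2}$ decays rapidly, and a dyadic-in-$t$ decomposition together with the $L^2(x)$-bound $\int_B|g_x(t)|^2\,dx\lesssim e^{-2ct^\gamma N^2}\|f_N\|_{L^2}^2$ (obtained via the change of variables $y=\Gamma(x,t)$ and Plancherel, using the bilipschitz hypothesis (\ref{3})) produces a summable contribution. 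On $[0,T_N]$ I would discretise time at scale $\delta$ and estimate
\[
\sup_{t\in[0,T_N]}|g_x(t)|\le \max_j|g_x(t_j)|+\max_j\sup_{|t-t_j|\le\delta}|g_x(t)-g_x(t_j)|, \quad g_x(t):=P_\gamma f_N(\Gamma(x,t),t),
\]
controlling the first term by a union bound and the second via the pointwise symbol estimate
\[
\bigl|e^{i\Gamma(x,s)\xi+(is-s^\gamma)|\xi|^2}-e^{i\Gamma(x,t)\xi+(it-t^\gamma)|\xi|^2}\bigr|\le C\bigl(|s-t|^\alpha|\xi|+|s-t||\xi|^2+|s^\gamma-t^\gamma||\xi|^2\bigr),
\]
which follows from $|e^{z_1}-e^{z_2}|\le |z_1-z_2|\max_j e^{\mathrm{Re}\,z_j}$ ($\mathrm{Re}\,z_j\le 0$) and the Hölder hypothesis (\ref{4}). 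Optimising $\delta$ in the presence of the truncation at $T_N$ produces two competing exponents: $\tfrac{1}{2}-\alpha$ from the Hölder term alone (when $T_N$ is large enough that dissipation is inactive), and $(\tfrac{1}{2}-\alpha/\gamma)^+$ from the truncation (when $T_N$ is small and dissipation dominates). Taking the minimum recovers $s(\gamma)$.

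\textbf{Sharpness and main obstacle.} For the converse, I would adapt the Dahlberg--Kenig--Sjölin--Bailey type constructions. For the threshold $\tfrac{1}{2}-\alpha$, I would take a bilipschitz curve of the form $\Gamma(x,t)=x+t^\alpha\psi(t)$ with a suitably oscillatory $\psi$, paired with an $H^s$ datum whose Fourier mass is concentrated at a single frequency $N$ calibrated so that the Hölder displacement of $\Gamma$ destroys the cancellation that would force convergence. For the threshold $(\tfrac{1}{2}-\alpha/\gamma)^+$, I would adapt the Bailey \cite{MR3047427} extremizer for $P_\gamma$ to the Hölder-curve setting, tuning the frequency scale against the dissipation. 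The main technical obstacle is the $L^2(x)$-estimate on the symbol difference in the absence of translation invariance: while $\int_B|g_x(s)|^2\,dx$ diagonalises cleanly via $y=\Gamma(x,s)$ and bilipschitz Plancherel, the cross term $\int_B g_x(s)\overline{g_x(t)}\,dx$ retains a residual dependence $\Gamma(\Gamma^{-1}(y,s),t)$ that prevents a direct $\delta$-function reduction in the frequency variables. Controlling this residual so that the Hölder/dissipation trade-off yields exactly $s(\gamma)$ without loss constitutes the heart of the proof; the simplification over \cite{MR4052710} in the range $\alpha\in[\tfrac{1}{2},1]$ should come from unifying the treatment of both regimes through the single pointwise symbol bound above, rather than invoking separate Carleson- and Bailey-type estimates.
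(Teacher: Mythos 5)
Your proposed strategy on $[0,T_N]$ — discretise time at scale $\delta$, bound $\max_j|g_x(t_j)|$ by a union bound, and bound the remainder pointwise via $\|\widehat{f_N}\|_{L^1}\lesssim N^{1/2}\|f_N\|_{L^2}$ — is genuinely too lossy to reach the claimed exponents. To see this, take $\alpha=\tfrac14$, $\gamma=1$, where Proposition~\ref{proposition2} demands a bound $\lambda^{1/4}\|f\|_{L^2}$. With $T_N=N^{-2}$, the union bound on the nontrivially dissipated time steps gives $(T_N/\delta)^{1/2}\|f_N\|_{L^2}$, while your pointwise symbol estimate gives $\delta^{\alpha}N\cdot N^{1/2}\|f_N\|_{L^2}$ for the leading H\"older term. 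Balancing $(T_N/\delta)^{1/2}=\delta^{1/4}N^{3/2}$ forces $\delta=N^{-10/3}$ and yields a bound $\lambda^{2/3}\|f_N\|_{L^2}$, substantially worse than the required $\lambda^{1/4}$. The loss is intrinsic: an $\ell^2$-union bound on the grid together with an $L^1\to L^\infty$ bound on the modulus-of-continuity term cannot recover the sharp $\tfrac12-\alpha$ exponent when $\gamma\geq 1$. You yourself flag the reason — the cross term $\int_B g_x(s)\overline{g_x(t)}\,dx$ cannot be diagonalised by a change of variables because $\Gamma$ is not translation-invariant — but your proposal leaves that exact obstruction unresolved, and the pointwise symbol estimate does not substitute for it.

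The paper avoids this entirely by linearising the supremum with a measurable selector $t(x)$ and passing to the $TT^*$ kernel
\[
K(x,y)=\int e^{i((\Gamma(x,t(x))-\Gamma(y,t(y)))\xi+(t(x)-t(y))|\xi|^2)}e^{-(t(x)^\gamma+t(y)^\gamma)|\xi|^2}\Psi(\xi/\lambda)\,d\xi,
\]
which it estimates by a case split ($|x-y|\lesssim|t(x)-t(y)|^\alpha$ versus $|x-y|\gg|t(x)-t(y)|^\alpha$, and within the latter comparing $|x-y|$ with $\lambda|t(x)-t(y)|$), using van der Corput's lemma, non-stationary phase, and the crucial observation $t(x)^\gamma+t(y)^\gamma\gtrsim|t(x)-t(y)|^\gamma$ to convert the dissipation into an arbitrary power of $\lambda^2|t(x)-t(y)|^\gamma$. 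The resulting kernel bound (\ref{estimate}) feeds into Schur's test, and the optimisation over the two free parameters $\beta_1,\beta_2$ reproduces exactly the exponents in (\ref{2.2}). This is a qualitatively different argument from yours: it is a bilinear estimate in $(x,y)$ with the two times $t(x),t(y)$ both free, rather than a $\delta$-net in time, and that is precisely what handles the cross terms you cannot diagonalise.

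\textbf{On the necessity part.} Your sketch for the threshold $\tfrac12-\alpha$ (``a bilipschitz curve of the form $\Gamma(x,t)=x+t^\alpha\psi(t)$ with a suitably oscillatory $\psi$'') is vaguer than needed and introduces oscillation in $\psi$ that plays no role. The paper simply takes $\Gamma(x,t)=x-t^\alpha$, the data $\widehat{f_R}(\eta)=R^{-1}g(\eta/R)$ with $g$ supported in $[0,\tfrac12]$, and the explicit selector $t_x=x^{1/\alpha}$; this forces $\phi_R$ and $\Phi_R$ in (\ref{3.4}) to be small on an interval of length $\sim R^{-2\alpha/\gamma}$ (for $\gamma<1$) or $\sim R^{-2\alpha}$ (for $\gamma\geq 1$), and the size of that interval directly yields $s\geq\tfrac12-\tfrac{\alpha}{\gamma}$, respectively $s\geq\tfrac12-\alpha$. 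No oscillatory $\psi$ and no Bailey-type extremizer are needed in the range $0<\alpha\leq\tfrac14$; the second counterexample (Theorem~\ref{theorem3.2}), with the modulated datum $\widehat{f_R}(\eta)=R^{-1}g((\eta+R^b)/R)$, is only invoked for $\alpha>\tfrac14$ and is not part of the proof of Theorem~\ref{theorem2}.

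In short: the necessity half of your outline is close in spirit to the paper but misidentifies which extremizer is needed, while the sufficiency half follows a route (time $\delta$-nets plus a pointwise modulus estimate) that demonstrably fails to reach the sharp exponent; the missing idea is the $TT^*$ kernel estimate (\ref{estimate}) and the use of $t(x)^\gamma+t(y)^\gamma\gtrsim|t(x)-t(y)|^\gamma$ to extract arbitrary powers of dissipation in the bilinear framework.
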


\begin{theorem}\label{theorem3}
	Let $\gamma\in(0,\infty)$. For any curve  $\Gamma(x,t) $ which is bilipschitz continuous in \( x \) and   $\alpha$-Hölder continuous in \( t \) with $ \frac{1}{4} < \alpha < \frac{1}{2} $, we have convergence result 
	(\ref{pointwise convergence 1}) whenever  $ s>s(\gamma)$. Conversely, there exists a curve satisfying the previous assumptions, while (\ref{pointwise convergence 1}) fails whenever $s<s(\gamma)$. Here
	\begin{equation}
		s(\gamma)=\begin{cases}
			0             \quad  & \gamma \in(0,2\alpha), \\ 
			\frac{1}{2}-\frac{\alpha} {\gamma}   \quad   & \gamma \in[2\alpha,1),  \\
			\frac{1}{2}-\alpha       \quad   & \gamma \in[1,\frac {1} {2\alpha}),  \\
			\frac{1}{2 }(1-\frac{1} {\gamma})   \quad   & \gamma \in[\frac {1} {2\alpha},2),  \\
			\frac{1}{4}  \quad   & \gamma \in[2,\infty). \
		\end{cases}
	\end{equation}
\end{theorem}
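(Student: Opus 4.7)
The plan is to reduce the almost-everywhere convergence to a local maximal function estimate
\[
\Big\|\sup_{0<t\leq 1} |P_\gamma f(\Gamma(\cdot,t),t)|\Big\|_{L^2(B(0,1))} \lesssim \|f\|_{H^s}
\]
for every $s>s(\gamma)$, from which pointwise convergence a.e. follows by the usual density argument. Starting from a Littlewood--Paley decomposition $f=\sum_k f_k$ with $\widehat{f_k}$ supported in $|\xi|\sim 2^k$, the Gaussian factor $e^{-t^\gamma|\xi|^2}$ has super-polynomial decay whenever $t^\gamma 2^{2k}\gg 1$, so at frequency $2^k$ it suffices to control the effective time range $0<t\lesssim \min(1,2^{-2k/\gamma})$. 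On this range the curve displacement satisfies $|\Gamma(x,t)-x|\lesssim t^\alpha \lesssim 2^{-2k\alpha/\gamma}$ and the Schr\"odinger phase varies by at most $t|\xi|^2 \lesssim 2^{2k(1-1/\gamma)}$; comparing these two quantities against $1$ produces exactly the five sub-ranges of $\gamma$ in the theorem.

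For the positive direction, in $(0,2\alpha)$ both phase contributions are bounded and a trivial $L^\infty$ estimate gives $s(\gamma)=0$. In $[2\alpha,1)$ the curve phase dominates and a rescaled Cho--Lee--Vargas argument applied on the shortened interval $[0,2^{-2k/\gamma}]$ delivers $s(\gamma)=\frac12-\frac{\alpha}{\gamma}$. In $[1,\frac{1}{2\alpha})$ the full tangential-curve window $t\in[-1,1]$ is admissible and the Cho--Lee--Vargas estimate directly yields $s(\gamma)=\frac12-\alpha$. In $[\frac{1}{2\alpha},2)$ the Schr\"odinger phase dominates and the Sj\"olin--Bailey flat-case bound gives $s(\gamma)=\frac12(1-\frac1\gamma)$; in $[2,\infty)$ the Gaussian is too weak to exploit and the classical Carleson estimate yields $s(\gamma)=\frac14$. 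In each case we invoke the bilipschitz change of variable in $x$ and conclude by a $TT^*$ argument applied to the frequency-localized oscillatory integral.

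For the necessity direction we construct Knapp-type counterexamples matching each of the five exponents. At frequency scale $N$, take $\widehat{f_N}$ the indicator of an interval of length $N^\beta$ around some $\xi\sim N$, and choose a test time $t=t(N)$ and a curve $\Gamma$ so that the real phase $\Gamma(x,t)\xi+t|\xi|^2$ is essentially stationary on a set of positive measure in $x$ while $e^{-t^\gamma|\xi|^2}$ remains bounded below. In the pure-Schr\"odinger regimes $(\gamma \geq \frac{1}{2\alpha})$ the classical examples of Dahlberg--Kenig \cite{MR0654188} and Sj\"olin--Bailey \cite{MR2549801,MR3047427} transfer directly; in the curve-dominated regimes we combine them with the Cho--Lee--Vargas-type cusp $\Gamma(x,t)=x+c\,t^\alpha \chi(x)$, with $\beta$ and $t(N)$ tuned to the appropriate phase balance.

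The main obstacle is the genuinely new regime $\gamma\in[2\alpha,1)$, where neither existing estimate applies off the shelf: the complex-time smoothing truncates the effective time window to $2^{-2k/\gamma}$, but the tangential curve still produces a derivative loss proportional to that window. Executing the Cho--Lee--Vargas stationary-phase argument on this shortened window, and tracking how the Gaussian cut-off interacts with the $\alpha$-H\"older modulus of continuity, is precisely what produces $s(\gamma)=\frac12-\frac{\alpha}{\gamma}$, and the matching counterexample requires a correspondingly fine-tuned Knapp packet at the same scale. Once this intermediate regime is handled, the remaining endpoints agree with Theorem \ref{theorem1} by direct comparison.
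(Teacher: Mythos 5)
Your high-level plan — reduce to a frequency-localized maximal estimate, exploit the Gaussian factor to gain decay, and combine with Cho--Lee--Vargas / Sj\"olin--Bailey type estimates — identifies the correct mechanisms, and your necessity construction (Knapp packets at an interval around $\xi\sim N$ with $t(N)$ tuned so $t(N)^\gamma N^2\lesssim 1$) matches the paper's counterexamples (Theorems 3.1--3.2, which use a bump $\widehat{f_R}(\eta)=\frac1R g(\frac{\eta+R^b}{R})$ and the curve $\Gamma(x,t)=x-t^\alpha$). So the overall architecture is fine. The paper, however, does not invoke CLV or Sj\"olin--Bailey as black boxes on a shortened window; it proves a single unified $TT^*$ kernel estimate with two free parameters $\beta_1,\beta_2\geq 0$,
\[
|K(x,y)| \lesssim \max\Bigl\{ \min\Bigl\{\tfrac{\lambda^{-2\beta_1}}{|x-y|^{\gamma\beta_1/\alpha+1/(2\alpha)}},\ \tfrac{\lambda^{1-2\beta_1}}{|x-y|^{\gamma\beta_1/\alpha}}\Bigr\},\ \tfrac{\lambda^{1/2-2\beta_2+\gamma\beta_2}}{|x-y|^{1/2+\gamma\beta_2}}\Bigr\},
\]
obtained by feeding the elementary bound $t(x)^\gamma+t(y)^\gamma\gtrsim|t(x)-t(y)|^\gamma$ into $e^{-z}\lesssim_\beta z^{-\beta}$, then applying van der Corput. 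All five ranges of $\gamma$ in Theorem~\ref{theorem3} come out of a single Schur test, with the table of $(\beta_1,\beta_2)$ choices encoding the regime changes.

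The genuine gap is in your treatment of $\gamma\in[2\alpha,1)$ and to a lesser extent $[\tfrac{1}{2\alpha},2)$. You say "a rescaled Cho--Lee--Vargas argument applied on the shortened interval $[0,2^{-2k/\gamma}]$ delivers $s(\gamma)=\frac12-\frac{\alpha}\gamma$," but this is not a rescaling that exists off the shelf. If you simply restrict CLV to a time window of length $T=\lambda^{-2/\gamma}$ and count the number of intervals on which the curve phase $\lambda\,\Gamma(x,t)\xi$ is essentially constant, you divide $[0,T]$ into $\sim T\lambda^{1/\alpha}$ pieces, apply a fixed-time $L^2$ bound on each, and obtain $\lambda^{(T\lambda^{1/\alpha})/2}=\lambda^{\frac1{2\alpha}-\frac1\gamma}$, which for $\alpha<\tfrac12$ and $\gamma<1$ is strictly worse than the claimed $\lambda^{\frac12-\frac\alpha\gamma}$ (e.g.\ $\alpha=\tfrac14$, $\gamma=1$ gives $1$ vs.\ $\tfrac14$). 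The correct exponent requires using the van der Corput decay $(\lambda^2|t-t'|)^{-1/2}$ \emph{simultaneously} with a polynomial gain $(\lambda^2|t-t'|^\gamma)^{-\beta_1}$ extracted from the Gaussian, and then converting $|t-t'|$ into $|x-y|$ via H\"older continuity; this is exactly what the paper's kernel estimate does with $\beta_1=\alpha/\gamma$. Your proposal flags this as "the main obstacle" and describes it as "tracking how the Gaussian cut-off interacts with the H\"older modulus," but no actual mechanism for that tracking is given, and a naive shortened-window CLV does not produce it. Similarly, in $[\tfrac{1}{2\alpha},2)$ one cannot literally apply the flat-case Sj\"olin--Bailey bound: the curve adds a phase contribution that must be verified to be subordinate, which again is the content of the kernel estimate (in that range one checks that the first branch integrates to $\lambda^{1-2\alpha}\le\lambda^{1-1/\gamma}$).
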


\subsection{Fractional cases.}
~\\
The pointwise  convergence of  fractional Schrödinger operator has also been studied extentsivly. The fractional Schrödinger operator  is defined as
\begin{equation}\label{frac m}
e^{i t (-\Delta)^{\frac{m}{2}}} f(x):=\frac{1}{2\pi}\int_{\mathbb{R}} e^{i \left(x\xi+ t|\xi|^{m}\right)}  \hat{f}(\xi) d \xi
\end{equation}
where $m>0$. Sjölin \cite{MR0904948} and Walther \cite{MR1347033} showed that  $s\ge \frac{1}{4}$ when $m>1$ and $s> \frac{m}{4}$ when $0<m<1$ are sharp results.  When $m=1$,  Rogers-Villarroya \cite{MR2379688} proved that $s>\frac{1}{2}$ is  sufficient enough. The convergence problem of fractional Schrödinger operator along curves has also been analyzed. Cho-Shiraki \cite{MR4307013} and Yuan-Zhao \cite{MR4359958} pointed $s>\max\{ \frac{1}{4},\frac{1-m\alpha}{2}\}$ when $m>1$ and $s>\max\{ \frac{2-m}{4},\frac{1-m\alpha}{2}\}$ when $0<m<1$ can get a.e. convergence.  For  convergence problem of fractional Schrödinger operator with complex time,  Bailey \cite{MR3047427} pointed that $s>\min\left\{\frac{1}{4},\frac{m}{4}\left(1-\frac{1}{\gamma}\right)^+ \right\}$ is the sharp result when $m>1$. Yuan-Zhao-Zheng \cite{MR4222395} extended it to $0<m<1$ and got result $s>\frac{1}{2}\left(1-\frac{1}{\gamma}\right)^+$ when $m=1$. The fractional form of (\ref{5}) is
\[
P^m_{\gamma}f(\Gamma(x,t),t):=\frac{1}{2\pi}\int_{\mathbb{R}} e^{i \left(\Gamma(x,t)+ t|\xi|^{m}\right)} e^{-t^{\gamma}|\xi|^{m}}\hat{f}(\xi) d \xi.
\]

When $m>2$ and $\alpha\in[\frac{1}{m},1]$, Niu-Xue \cite{MR4052710} considered the pointwise convergence problem. We extend the results to $m>0$ and $\alpha\in(0,1]$. Since proof method  is similar to the proof of Theorem \ref{theorem1}, Theorem \ref{theorem2} and Theorem \ref{theorem3}, we will only prensent the results in Section \ref{chap4} while omit the proof.

~\\
\textbf{Outline.} In the following part, we will deal with sufficiency in Section \ref{chap2} and necessity in Section \ref{chap3}. The  fractional cases is presented in Section \ref{chap4}.\\
~\\
\textbf{Notation.} Throughout this article, we write \( A \gg B \) to mean if there is a large constant \( G \), which does not depend on \( A \) and \( B \), such that \( A \geq G B \). We write $A \lesssim B$  to mean that there exists a constant $C$ such that $A \leq C B$. We write $A \gtrsim B$  to mean that there exists a constant $C$ such that $A \geq C B$. We write \( A \sim B \), and mean that \( A \) and \( B \) are comparable.  We write  supp \( \hat{f} \subset\left\{\xi:|\xi| \sim \lambda \right\} \) to mean  supp \( \hat{f} \subset\left\{\xi: \frac{\lambda}{2}\le |\xi| \le 2\lambda \right\} \) and we will always assume $\lambda\gg1$. We write $C_{X}$ to denote a constant that depends on $X$, where $X$ is a variable. We write $A \lesssim_{\varepsilon} B$ to mean, there exists a constant $C_{\varepsilon}$ such that $A \leq C_{\varepsilon} B$.	
	
%%%%%%%%%%%%%two%%%%%%%%%%%%%%%%%%%%%

\section{Proof of upper bounds for maximal functions}\label{chap2}
\numberwithin{equation}{section}
Via a standard smooth approximation argument and  Littlewood-Paley decomposition, Theorem \ref{theorem1}, Theorem \ref{theorem2} and Theorem \ref{theorem3} can be reduced to the following three maximal estimates. These estimates reveal the improved temporal localization phenomenon caused by the addition of term $e^{-t^\gamma \xi^2}$  in oscillatory integration.
\begin{proposition}\label{proposition1}
	Let $\gamma$, $\Gamma$ satisfy assumptions in Theorem \ref{theorem1}. For any $\varepsilon>0$, we have
	\begin{equation}\label{2.1}
	\left\|\sup _{t \in [0,1] }\left|P_{\gamma} f(\Gamma(x,t),t)\right|\right\|_{L^{2}([-1,1])} \lesssim_{\varepsilon} \left\{\begin{matrix}
		\|f\|_{L^{2}} &  \gamma \in(0,1),		\\	\lambda^{\frac{1}{2}(1-\frac{1}{\gamma})+\varepsilon} \|f\|_{L^{2}} &  \gamma \in[1, 2),\\
		\lambda^{\frac{1}{4}} \|f\|_{L^{2}} &\gamma \in[2,\infty),
	\end{matrix}\right.
	\end{equation}
	for all \( f \) with supp \( \hat{f} \subset\left\{\xi:|\xi|\sim \lambda \right\} \). 
\end{proposition}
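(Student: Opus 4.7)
The strategy is based on time localization. Since $|\xi|\sim\lambda$, the Gaussian factor $e^{-t^{\gamma}|\xi|^{2}}$ is bounded by $\lambda^{-N}$ once $t\geq T_{0}:=C_{N}\lambda^{-2/\gamma}(\log\lambda)^{1/\gamma}$. Hence the contribution from $t\in[T_{0},1]$ is negligible (bounded by $\lesssim_{N}\lambda^{-N}\|f\|_{2}$ via Plancherel), and it suffices to estimate the supremum over the short interval $t\in[0,T_{0}]$. The three branches of (\ref{2.1}) correspond to the three regimes of the quantity $T_{0}\lambda^{2}=\lambda^{2-2/\gamma}$, i.e.\ whether the effective time horizon is shorter than, comparable to, or longer than the natural Schrödinger time scale $\lambda^{-2}$.

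For $\gamma\in[2,\infty)$, $T_{0}\geq\lambda^{-1}$ and the time restriction provides no meaningful improvement. I would simply drop the Gaussian factor using $|e^{-t^{\gamma}|\xi|^{2}}|\leq 1$ and invoke the classical Lee--Rogers / Cho--Lee--Vargas maximal estimate for the Schrödinger operator along bilipschitz $\alpha$-Hölder curves with $\alpha\geq 1/2$, which yields $\lambda^{1/4+\varepsilon}\|f\|_{2}$. For $\gamma\in(0,1)$, the complex time $z=t^{\gamma}-it$ satisfies $\operatorname{Re}z=t^{\gamma}\geq|\operatorname{Im}z|=t$ on $[0,T_{0}]$, so the convolution kernel of $P_{\gamma}(\cdot,t)$ is pointwise dominated by the real heat kernel of parameter $t^{\gamma}$. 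Hence $|P_{\gamma}f(\Gamma(x,t),t)|\lesssim(M|f|)(\Gamma(x,t))$, where $M$ denotes the Hardy--Littlewood maximal function; taking the supremum in $t$ along the bilipschitz curve and using that $\Gamma(x,\cdot)$ traverses a bounded range gives a further bound by the iterated maximal function $(M\circ M)(|f|)(x)$, whose $L^{2}$ norm is $\lesssim\|f\|_{2}$.

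The intermediate range $\gamma\in[1,2)$ is the heart of the proof. Here $1\lesssim T_{0}\lambda^{2}\leq\lambda$ and the required bound is the \emph{restricted} maximal estimate
\begin{equation*}
\Bigl\|\sup_{t\in[0,T_{0}]}\bigl|P_{\gamma}f(\Gamma(x,t),t)\bigr|\Bigr\|_{L^{2}([-1,1])}\lesssim_{\varepsilon}(T_{0}\lambda^{2})^{1/4+\varepsilon}\|f\|_{2}=\lambda^{\frac{1}{2}(1-\frac{1}{\gamma})+\varepsilon}\|f\|_{2}.
\end{equation*}
To prove this I would linearize the supremum via a measurable selector $t(x)\in[0,T_{0}]$ and consider the operator $\mathcal{T}f(x):=P_{\gamma}f(\Gamma(x,t(x)),t(x))$; then $\|\mathcal{T}\|_{L^{2}\to L^{2}}^{2}=\|\mathcal{T}\mathcal{T}^{*}\|_{L^{2}\to L^{2}}$ is governed by the kernel
\begin{equation*}
K(x,x')=\int e^{i[\Gamma(x,t(x))-\Gamma(x',t(x'))]\xi+i[t(x)-t(x')]\xi^{2}-[t(x)^{\gamma}+t(x')^{\gamma}]|\xi|^{2}}\chi(\xi/\lambda)\,d\xi.
\end{equation*}
Stationary phase in $\xi$ on dyadic annuli, combined with the bilipschitz-in-$x$ and $\alpha\geq 1/2$-Hölder-in-$t$ properties of $\Gamma$ and the constraint $|t(x)-t(x')|\leq T_{0}$, yields a kernel decay estimate which via Schur's test produces the restricted bound.

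The main technical obstacle is the stationary phase analysis in this restricted regime: one must split into the cases $|x-x'|\gtrsim|t(x)-t(x')|^{\alpha}$ (where the bilipschitz displacement dominates the $\xi$-derivative of the phase) and $|x-x'|\lesssim|t(x)-t(x')|^{\alpha}$ (where the Hölder contribution cannot be neglected), and to verify that the hypothesis $\alpha\geq 1/2$ is precisely what forces both contributions to be summable up to an $\varepsilon$-loss. The Gaussian damping $e^{-[t(x)^{\gamma}+t(x')^{\gamma}]|\xi|^{2}}$ is harmless (modulus $\leq 1$) on the relevant regime $t(x),t(x')\leq T_{0}$ but is crucial in confirming that no contribution from $t\geq T_{0}$ survives. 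Inserting $T_{0}\sim\lambda^{-2/\gamma}$ into the three case bounds recovers exactly the three branches of (\ref{2.1}).
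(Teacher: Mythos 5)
Your time-localization strategy is a genuine reorganization of the argument, and most of it lands, but the $\gamma\in(0,1)$ branch as written has a gap.

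The core observation---that the Gaussian factor is $O(\lambda^{-N})$ once $t\gtrsim\lambda^{-2/\gamma}(\log\lambda)^{1/\gamma}$, so one may restrict to $t\in[0,T_0]$---is exactly the content that the paper packages differently, namely as the kernel bound
$\sup_{|\xi|\sim1}|G(\xi)|+\int|G'(\xi)|\,d\xi\lesssim_\beta(\lambda^2|t(x)-t(y)|^\gamma)^{-\beta}$
(valid for all $\beta\ge 0$). That estimate forces the $TT^*$ kernel to live on the diagonal strip $|t(x)-t(y)|\lesssim\lambda^{-2/\gamma}$ without an explicit time truncation, and then a single Schur-test computation, with the free parameters $\beta_1,\beta_2$ chosen per regime, produces all three branches of (\ref{2.1}) at once. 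Your plan does the truncation upfront and then handles each regime separately: for $\gamma\ge 2$ you throw away the Gaussian and quote Cho--Lee--Vargas; for $\gamma\in[1,2)$ you prove a restricted maximal estimate $\|\sup_{t\le T_0}|\cdot|\|_{L^2}\lesssim(\lambda^2 T_0)^{1/4+\varepsilon}\|f\|_2$ by essentially the same $TT^*$/van der Corput/Schur computation as the paper (your verification that the tangential case $|x-y|\lesssim|t(x)-t(y)|^\alpha$ is harmless precisely when $\alpha\ge\tfrac12$ is the same split the paper makes); for $\gamma\in(0,1)$ you use heat-kernel domination. The modular version is pleasant in that the intermediate case is isolated as the only place requiring fresh work, at the cost of invoking outside results; the paper's version is more self-contained.

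The gap is in the $\gamma\in(0,1)$ step. After bounding $|P_\gamma f(y,t)|\lesssim Mf(y)$, you pass to $\sup_t Mf(\Gamma(x,t))\le\sup_{|y-x|\lesssim 1}Mf(y)$ and assert that this is $\lesssim(M\circ M)(|f|)(x)$. That inequality is false: taking $f=\chi_{[-\epsilon,\epsilon]}$ gives $Mf(y)\approx\min\{1,\epsilon/|y|\}$, so $\sup_{|y-1|\le1}Mf(y)\approx1$ while $M(Mf)(1)\approx\epsilon\log(1/\epsilon)\to0$. The supremum of $Mf$ over a unit ball is an $L^\infty$-type quantity that the iterated maximal function does not control. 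The correct argument is to absorb the curve displacement into the heat kernel \emph{before} taking the sup: on $[0,T_0]$ one has $|\Gamma(x,t)-x|\lesssim t^\alpha$, and since $\alpha\ge\tfrac12>\gamma/2$ for $\gamma<1$, this shift is $\lesssim t^{\gamma/2}$, i.e.\ comparable to the width of the heat kernel at that time. Hence $\frac{1}{t^{\gamma/2}}e^{-c|\Gamma(x,t)-y|^2/t^\gamma}\lesssim\frac{1}{t^{\gamma/2}}e^{-c'|x-y|^2/t^\gamma}$ uniformly, and one gets $\sup_t|P_\gamma f(\Gamma(x,t),t)|\lesssim Mf(x)$ directly with a single maximal function. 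This is where the hypothesis $\alpha\ge\tfrac12$ enters the $\gamma<1$ case, and your write-up should make that explicit rather than rely on the (false) iterated-maximal-function bound.
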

\begin{proposition}\label{proposition2}
	Let $\gamma$, $\Gamma$ satisfy assumptions in Theorem \ref{theorem2}. For any $\varepsilon>0$, we have
	
	\begin{equation}\label{2.2}
		\left\|\sup _{t \in [0,1] }\left|P_{\gamma} f(\Gamma(x,t),t)\right|\right\|_{L^{2}([-1,1])} \lesssim_{\varepsilon} \left\{\begin{matrix}
		\|f\|_{L^{2}} &  \gamma \in(0,2\alpha),		\\	\lambda^{{\frac{1}{2}}-\frac{\alpha}{\gamma}+\varepsilon} \|f\|_{L^{2}} &  \gamma \in[2 \alpha,1),\\
		\lambda^{\frac{1}{2}-\alpha} \|f\|_{L^{2}} &\gamma \in[1,\infty),
	\end{matrix}\right.
	\end{equation}
	for all \( f \) with supp \( \hat{f} \subset\left\{\xi:|\xi|\sim \lambda \right\} \). 
\end{proposition}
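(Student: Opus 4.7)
The plan is standard for pointwise convergence: linearize the supremum, truncate in time using the damping factor, then apply a $T^{*}T$ argument via Schur's test. For any measurable $t:\mathbb{R}\to[0,1]$, set $Tf(x):=P_{\gamma}f(\Gamma(x,t(x)),t(x))$ and bound $\|T\|_{L^{2}\to L^{2}}$ uniformly in $t(\cdot)$. Since $e^{-t^{\gamma}|\xi|^{2}}=O_{N}(\lambda^{-N})$ once $t\ge\lambda^{-2/\gamma+\varepsilon'}$ for $|\xi|\sim\lambda$, I may restrict to $t(x)\in[0,\delta]$ with $\delta:=\lambda^{-2/\gamma+\varepsilon'}$, a small $\varepsilon'\ll\varepsilon$ absorbing logarithmic losses. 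The $T^{*}T$ kernel is
\[
\tilde K(x,y)=\int e^{i[(\Gamma(x,t(x))-\Gamma(y,t(y)))\xi+(t(x)-t(y))|\xi|^{2}]}\,e^{-(t(x)^{\gamma}+t(y)^{\gamma})|\xi|^{2}}\,|\phi(\xi/\lambda)|^{2}\,d\xi,
\]
and by Schur's test it suffices to show $\sup_{x}\int|\tilde K(x,y)|\,dy\lesssim R^{2}$ with $R$ the target bound appearing in \eqref{2.2}.

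For $\gamma\in(0,2\alpha)$, the $\alpha$-Hölder bound forces $\delta^{\alpha}\ll\lambda^{-1}$ (this is where $\gamma<2\alpha$ is used), while the Schrödinger phase $(t(x)-t(y))|\xi|^{2}\lesssim\lambda^{2-2/\gamma}\ll 1$ is a harmless amplitude. The principal term of the phase is $(x-y)\xi$ by the bilipschitz condition, and repeated integration by parts in $\xi$ gives $|\tilde K(x,y)|\lesssim_{N}\lambda(1+\lambda|x-y|)^{-N}$, hence $\int|\tilde K(x,y)|\,dy=O(1)$. For $\gamma\in[2\alpha,1)$ the Schrödinger phase is still $O(1)$, but $\delta^{\alpha}\sim\lambda^{-2\alpha/\gamma}\gtrsim\lambda^{-1}$, so the $\Gamma$-variation in $t$ is no longer negligible; combining bilipschitz with the triangle inequality yields $|\Gamma(x,t(x))-\Gamma(y,t(y))|\gtrsim|x-y|$ whenever $|x-y|\gg\lambda^{-2\alpha/\gamma}$, whence integration by parts gives rapid decay for such $y$, and the trivial bound $|\tilde K|\lesssim\lambda$ is used on the band $|x-y|\lesssim\lambda^{-2\alpha/\gamma}$. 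The resulting Schur bound is $\int|\tilde K|\,dy\lesssim\lambda^{1-2\alpha/\gamma}$, giving $\|T\|_{L^{2}\to L^{2}}\lesssim\lambda^{1/2-\alpha/\gamma+\varepsilon}$.

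The main obstacle is $\gamma\ge 1$, where the Schrödinger phase $(t(x)-t(y))|\xi|^{2}$ can reach $\lambda^{2-2/\gamma}\ge 1$ on the effective window and can no longer be absorbed into the amplitude. My plan is a dyadic decomposition in the time difference $|t(x)-t(y)|\sim 2^{-k}$ followed by stationary phase in $\xi$ on each piece: in the absence of a stationary point in $|\xi|\sim\lambda$, repeated integration by parts yields rapid decay, while in the stationary regime one gains the factor $|t(x)-t(y)|^{-1/2}\sim 2^{k/2}$. The $\alpha$-Hölder bound $|\Gamma(x,t(x))-\Gamma(y,t(y))|\lesssim|x-y|+2^{-k\alpha}$ combined with bilipschitz control then forces only dyadic scales $2^{-k}\lesssim\lambda^{-2}$ to contribute significantly on the band $|x-y|\lesssim\lambda^{-2\alpha}$, and Schur yields $\int|\tilde K|\,dy\lesssim\lambda^{1-2\alpha}$, producing the target $\lambda^{1/2-\alpha}$. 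The delicate point will be to execute this stationary/non-stationary dichotomy uniformly in the measurable selector $t(\cdot)$ and to sum the dyadic contributions with losses at worst polynomial in $\log\lambda$, which are absorbed into $\varepsilon$.
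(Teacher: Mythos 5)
Your plan for the regimes $\gamma\in(0,2\alpha)$ and $\gamma\in[2\alpha,1)$ is sound and is essentially a repackaging of the paper's argument: the paper keeps the full time window and absorbs the damping factor into the amplitude via $e^{-\lambda^{2}|t(x)-t(y)|^{\gamma}}\lesssim_{\beta}(\lambda^{2}|t(x)-t(y)|^{\gamma})^{-\beta}$, whereas you truncate the window to $t\lesssim\lambda^{-2/\gamma+\varepsilon'}$ up front; both exploit the same information, and your version is arguably cleaner when $\gamma<1$.

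The regime $\gamma\in[1,\infty)$, however, is where the real work lies, and your sketch has concrete gaps. First, the assertion that ``only dyadic scales $2^{-k}\lesssim\lambda^{-2}$ contribute significantly on the band $|x-y|\lesssim\lambda^{-2\alpha}$'' is not correct: the stationary point of the phase $\lambda(\Gamma(x,t(x))-\Gamma(y,t(y)))\xi+\lambda^{2}(t(x)-t(y))\xi^{2}$ sits at $|\Gamma(x,t(x))-\Gamma(y,t(y))|\sim\lambda|t(x)-t(y)|$, and since the H\"older bound allows $|\Gamma(x,t(x))-\Gamma(y,t(y))|$ to be as large as $2^{-k\alpha}$ even when $|x-y|\lesssim\lambda^{-2\alpha}$, every scale with $2^{k(1-\alpha)}\sim\lambda$ (which is $\gg\lambda^{-2}$) produces a genuine stationary regime and must be estimated. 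Second, and more seriously, you never address the off-band region $|x-y|\gg\lambda^{-2\alpha}$: there one lands in what the paper calls case 2b ($|t(x)-t(y)|^{\alpha}\ll|x-y|\lesssim\lambda|t(x)-t(y)|$), where van der Corput gives $|\tilde K|\lesssim\lambda^{1/2}|x-y|^{-1/2}$ and hence a contribution $\sim\lambda^{1/2}$ to $\int|\tilde K|\,dy$. This is $\lesssim\lambda^{1-2\alpha}$ only because $\alpha\le\frac14$; your write-up never invokes that hypothesis, which is a signal that the computation that actually needs it has not been carried out. Finally, your time truncation becomes vacuous as $\gamma\to\infty$ ($\delta\to 1$), so it provides no real leverage in this regime.

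The paper resolves all of this with a single clean device: convert the van der Corput/non-stationary dichotomy into a pointwise bound on $|K(x,y)|$ depending only on $|x-y|$, namely
\begin{equation*}
|K(x,y)|\ \lesssim\ \max\Bigl\{\min\bigl\{|x-y|^{-\frac{1}{2\alpha}},\,\lambda\bigr\},\ \lambda^{\frac12}|x-y|^{-\frac12}\Bigr\}
\end{equation*}
(this is \eqref{estimate} with $\beta_{1}=\beta_{2}=0$), and then integrate in $y$; the three pieces give $\lambda^{1-2\alpha}$, $\lambda^{1-2\alpha}$ (here $\alpha\le\frac14$ makes the exponent $\frac{1}{2\alpha}\ge2$), and $\lambda^{1/2}$ (here $\alpha\le\frac14$ again makes $\lambda^{1/2}\le\lambda^{1-2\alpha}$). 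If you want to keep your dyadic decomposition in $|t(x)-t(y)|$, the fix is to use the case constraint on each dyadic piece to convert the time-scale factor $2^{k/2}$ into a power of $|x-y|$ before summing in $k$ (otherwise the sum over $k$ over-counts, since the sets $\{|x-y|\lesssim\lambda 2^{-k}\}$ are nested rather than disjoint), which is precisely what the paper's pointwise bound accomplishes directly.
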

\begin{proposition}\label{proposition3}
	Let $\gamma$, $\Gamma$ satisfy assumptions in Theorem \ref{theorem3}. For any $\varepsilon>0$, we have
	
	\begin{equation}\label{2.3}
	\left\|\sup _{t \in [0,1] }\left|P_{\gamma} f(\Gamma(x,t),t)\right|\right\|_{L^{2}([-1,1])} \lesssim_\varepsilon
	\left\{\begin{matrix}
		\|f\|_{L^{2}} &  \gamma \in(0,2\alpha),		\\	\lambda^{{\frac{1}{2}}-\frac{\alpha}{\gamma}+\varepsilon} \|f\|_{L^{2}} &  \gamma \in[2 \alpha,1),\\
		\lambda^{\frac{1}{2}-\alpha} \|f\|_{L^{2}} &\gamma \in[1,\frac{1}{2\alpha}),\\
		\lambda^{\frac{1}{2}(1-\frac{1}{\gamma})+\varepsilon} \|f\|_{L^{2}}
		&\gamma \in	[\frac{1}{2\alpha},2),\\
		\lambda^\frac{1}{4} \|f\|_{L^{2}}
		&\gamma \in [2,\infty),
	\end{matrix}\right.
\end{equation}
for all \( f \) with supp \( \hat{f} \subset\left\{\xi:|\xi|\sim \lambda \right\} \). 
\end{proposition}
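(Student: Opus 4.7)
The plan is to derive the five-case bound in Proposition \ref{proposition3} by adapting the two arguments used to prove Propositions \ref{proposition1} and \ref{proposition2} and splitting the range of $\gamma$ at the natural threshold $1/(2\alpha)$. Both arguments begin with the same preliminary reduction to a small time interval: since $e^{-t^\gamma|\xi|^2}=O(\lambda^{-N})$ for $|\xi|\sim\lambda$ and $t\gtrsim t_0(\log\lambda)^{1/\gamma}$ with $t_0:=\lambda^{-2/\gamma}$, a standard truncation confines the supremum to $t\in[0,Ct_0(\log\lambda)^{1/\gamma}]$ modulo a negligible $\lambda^{-N}\|f\|_{L^2}$ error.

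On this truncated interval I would proceed in two cases. For $\gamma\ge 1/(2\alpha)$, I would apply a Schrödinger-type argument, following the proof of Proposition \ref{proposition1}: after rescaling time by $t_0$ so that the effective Schrödinger phase sees time $t_0\lambda^2=\lambda^{2(1-1/\gamma)}$, a standard Carleson $L^2$ maximal estimate applied to the rescaled operator produces $\lambda^{\frac{1}{2}(1-1/\gamma)+\varepsilon}\|f\|_{L^2}$ for $\gamma\in[1/(2\alpha),2)$ and $\lambda^{1/4}\|f\|_{L^2}$ for $\gamma\ge 2$. For $\gamma<1/(2\alpha)$, I would apply a tangential-curve argument, following the proof of Proposition \ref{proposition2}: decompose $[0,t_0]$ at the tangential scale $\tau=\lambda^{-1/\alpha}$ on which $\Gamma(x,\cdot)\xi$ is frozen up to $O(1)$, apply a Sobolev-type maximal inequality per subinterval, and combine the $\sim\lambda^{1/\alpha-2/\gamma+\varepsilon}$ subintervals via $L^2$ orthogonality to yield $\|f\|_{L^2}$ for $\gamma\in(0,2\alpha)$, $\lambda^{\frac{1}{2}-\alpha/\gamma+\varepsilon}\|f\|_{L^2}$ for $\gamma\in[2\alpha,1)$, and $\lambda^{\frac{1}{2}-\alpha}\|f\|_{L^2}$ for $\gamma\in[1,1/(2\alpha))$. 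At the crossover $\gamma=1/(2\alpha)$ the two exponents coincide at $\lambda^{1/2-\alpha}$, so the two strategies meet cleanly.

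The principal technical obstacle is extending the tangential-curve estimate of Proposition \ref{proposition2} from its original range $\alpha\in(0,\tfrac14]$ to $\alpha\in(\tfrac14,\tfrac12)$. As $\alpha$ grows toward $1/2$, the gap between the tangential scale $\lambda^{-1/\alpha}$ and the Schrödinger scale $\lambda^{-2}$ narrows, so a naive summation of per-interval $\|f\|_{L^2}$-bounds loses the sharp exponent; recovering it requires $L^2$ orthogonality between subintervals together with careful tracking of the Gaussian truncation from the preliminary step, so that the tangential bound $\lambda^{1/2-\alpha}$ sharpens to $\lambda^{1/2-\alpha/\gamma}$ in the complex-time regime $\gamma<1$. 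A secondary obstacle is verifying that the Schrödinger-type argument of Proposition \ref{proposition1} remains valid for $\alpha\in(\tfrac14,\tfrac12)$ when $\gamma\ge 1/(2\alpha)$, a regime in which the rescaled curve carries a nontrivial Hölder constant in rescaled time and one must confirm that this does not degrade the Carleson step.
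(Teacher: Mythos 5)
Your plan is genuinely different from the paper's. The paper proves Propositions \ref{proposition1}, \ref{proposition2}, and \ref{proposition3} simultaneously by a single $TT^{*}$/Schur-test argument: it writes the kernel $K(x,y,t(x),t(y))$, bounds it pointwise via van der Corput and non-stationary phase, and absorbs the Gaussian factor $e^{-(t(x)^\gamma+t(y)^\gamma)\lambda^2}$ into the kernel bound through the free-parameter inequality $e^{-y}\lesssim_\beta y^{-\beta}$, leading to the two-parameter estimate (\ref{estimate}); it then integrates and optimizes $(\beta_1,\beta_2)$ regime by regime (the table). There is no time truncation, no rescaling, and no decomposition into temporal subintervals. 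So when you say you will proceed "following the proof of Proposition \ref{proposition1}" or "following the proof of Proposition \ref{proposition2}" by truncating to $t\lesssim\lambda^{-2/\gamma}$ and either rescaling or tiling by $\lambda^{-1/\alpha}$-intervals, that is not what those proofs do; you are proposing an alternative route.

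The alternative route has a concrete gap in the range $\gamma\in[2\alpha,1)$. After your preliminary truncation to $[0,T]$ with $T\sim\lambda^{-2/\gamma}$, the (time-restricted) Schur test on the kernel gives $I(x)\lesssim\lambda^{1-2\alpha}+\lambda T^{1/2}=\lambda^{1-2\alpha}+\lambda^{1-1/\gamma}$. For $\gamma<1$ the second term is $o(1)$ and the first dominates, yielding only $\lambda^{1/2-\alpha}$ — not the target $\lambda^{1/2-\alpha/\gamma}$, which is strictly smaller. The improvement cannot come from the subinterval decomposition either: there are $N\sim\lambda^{1/\alpha-2/\gamma}$ subintervals of length $\lambda^{-1/\alpha}$, and the supremum over different subintervals acts on the same frequency-localized $f$, so there is no genuine $L^2$ orthogonality to exploit; the $\ell^2$ sum of per-interval $O(\|f\|_{L^2})$ bounds gives $N^{1/2}\|f\|_{L^2}=\lambda^{1/(2\alpha)-1/\gamma}\|f\|_{L^2}$, which for $\alpha<1/2$ is worse than $\lambda^{1/2-\alpha/\gamma}$ by a positive power of $\lambda$. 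You acknowledge that "careful tracking of the Gaussian truncation" is needed to sharpen $\lambda^{1/2-\alpha}$ to $\lambda^{1/2-\alpha/\gamma}$, but you do not supply the mechanism. The paper's mechanism is precisely the $\beta$-parameter: since $t(x)^\gamma+t(y)^\gamma\gtrsim|t(x)-t(y)|^\gamma$, the Gaussian contributes the extra factor $(\lambda^2|t(x)-t(y)|^\gamma)^{-\beta_1}$ on top of the van der Corput bound, and in the tangential region $|x-y|\lesssim|t(x)-t(y)|^\alpha$ the choice $\beta_1=\alpha/\gamma$ converts this into $\lambda^{-2\alpha/\gamma}|x-y|^{-1}$, whose integral is $\lambda^{1-2\alpha/\gamma}\log\lambda$, giving the sharp $\lambda^{1/2-\alpha/\gamma+\varepsilon}$ after Schur. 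Without an equivalent device, the $\gamma\in[2\alpha,1)$ row of (\ref{2.3}) is not reached by your argument. The remaining rows ($\gamma\in(0,2\alpha)$, $\gamma\in[1,1/(2\alpha))$, $\gamma\in[1/(2\alpha),2)$, $\gamma\ge2$) do follow from time truncation plus the unparametrized Schur test, so those parts of your plan are sound once the "Carleson/rescaling" step is replaced by the explicit kernel computation.
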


Next we handle the three maximal estimates mentioned above. \\

\textbf{Proof of Proposition \ref{proposition1}, Proposition \ref{proposition2} and Proposition \ref{proposition3}.}\\
 	
 	By linearizing the maximal operator, \( x \mapsto t(x) \) be a measurable function with values in \( [0,1] \), using the $TT^{*}$ method, we only need to consider the kernel, denoted by 
 
  	\[
  	K\left(x, y, t(x), t(y)\right)=\int e^{i\left(\left(\Gamma(x, t(x))-\Gamma\left(y, t(y)\right)\right)  \xi+\left(t(x)-t(y)\right)|\xi|^{2}\right)} e^{-\left(t(x)^\gamma +t(y)^\gamma \right)|\xi|^{2} } \Psi(\frac{\xi}{\lambda}  ) d \xi,
  	\]
  	where \( \Psi \in C_{0}^{\infty}\left(\left(-2,-\frac{1}{2}\right) \cup\left(\frac{1}{2}, 2\right)\right) \) is a positive function.
   \\
  	Next we will prove that for any $\beta_1,\beta_2\geq 0$, $x,y\in[-1,1]$ we have  
   \begin{equation}\label{estimate}
   \left|K\left(x, y, t(x), t(y)\right)\right| \lesssim_{\beta_1,\beta_2}\max\left\{ \min{\left\{   \frac{\lambda^{-2\beta_1}}{\left|x-y\right|^{\frac{\gamma \beta_1}{\alpha}+\frac{1}{2\alpha}}}, \frac{\lambda^{1-2\beta_1}}{{\left|x-y\right|}^{\frac{\gamma \beta_1}{\alpha}}}  \right\}}, \frac{\lambda^{\frac{1}{2}-2\beta_2+\gamma\beta_2}}{|x-y|^{\frac{1}{2}+\gamma\beta_2}} \right\}.
      \end{equation}

  	 Changing  variables \( \xi \rightarrow \lambda \xi \) shows
  	\[
  	K\left(x, y, t(x), t(y)\right) =\lambda \int e^{i\left(\lambda\left(\Gamma(x, t(x))-\Gamma\left(y, t(y)\right)\right)  \xi+\lambda^{2}\left(t(x)-t(y)\right)|\xi|^{2}\right)} e^{-\lambda^2 \left(t(x)^\gamma +t(y)^\gamma \right)|\xi|^{2} } \Psi(\xi ) d \xi.	\]
  	Set
  	\[
  	\varphi(\xi)=\lambda\left(\Gamma(x, t(x))-\Gamma\left(y, t(y)\right)\right)  \xi+\lambda^{2}\left(t(x)-t(y)\right)|\xi|^{2},
  	\]
  	\[
  	G(\xi)=e^{-\lambda^2 \left(t(x)^\gamma +t(y)^\gamma \right)|\xi|^{2} } \Psi(\xi ) ,
  	\]
  	to rewrite $K\left(x, y, t(x), t(y)\right)$ as
  	\[ K\left(x, y, t(x), t(y)\right)=\lambda \int e^{i\varphi(\xi)}G(\xi) d \xi.\] It is evident that $K\left(x, y, t(x), t(y)\right) \lesssim \lambda$.
  	Since \( t(x), t(y) \in [0,1] \), recalling our assumptions on curves, we notice  that
  	\[
  	\begin{aligned}
  		\centering
  		\Gamma(x, t(x))-\Gamma\left(y, t(y)\right) & =\left(\Gamma(x, t(x)-\Gamma\left(x, t(y)\right)\right)+\left(\Gamma\left(x, t(y)\right)-\Gamma\left(y, t(y)\right)\right), \\
  		\left|\Gamma(x, t(x))-\Gamma\left(x, t(y)\right)\right| & \lesssim\left|t(x)-t(y)\right|^{\alpha}, \\
  		\left|\Gamma\left(x, t(y)\right)-\Gamma\left(y, t(y)\right)\right| & \sim|x-y|.
  	\end{aligned}
  	\]
  	
  	We consider two cases \( |x-y| \lesssim\left|t(x)-t(y)\right|^{\alpha},|x-y| \gg\left|t(x)-t(y)\right|^{\alpha} \) separately. 
  	For the first case \( |x-y| \lesssim\left|t(x)-t(y) \right|^{\alpha} \), we have \( \left|\frac{d^{2}}{d \xi^{2}} \varphi\right| \gtrsim \lambda^{2}\left|t(x)-t(y)\right| \). Thus van der Corput's lemma shows	
  	\begin{equation}\label{K2.1}
  		\left|K\left(x, y, t(x), t(y)\right)\right|  \lesssim  \frac{\lambda}{\left(1+\lambda^{2}\left|t(x)-t(y)\right|\right)^{1 / 2}} \left( \sup _{|\xi|\sim 1}|G(\xi)|+\int_{|\xi|\sim 1}\left|G^{\prime}(\xi)\right| d \xi\right) .
  \end{equation}\\  
 Next, we claim that for any $\beta\ge0$, there exists a positive number $C_\beta$ such that
  	\begin{equation}\label{supG}
  		\left( \sup _{|\xi|\sim 1}|G(\xi)|+\int_{|\xi|\sim 1}\left|G^{\prime}(\xi)\right| d \xi\right)\leq  \frac{C_\beta}{\left( \lambda^2 \left|t(x)-t(y)\right|^\gamma \right)^\beta}.
  	\end{equation}			
  	First, observing that $t(x)^{\gamma}+t(y)^{\gamma}\gtrsim\left(t(x)+t(y)\right)^{\gamma} \geqslant \left| t(x)-t(y)\right|^{\gamma}$, and noting that for any \( y>0, \beta\ge0 \), the inequality
  	\[
  	e^{-y} \lesssim_{\beta} y^{-\beta},
  	\]
  	we can get
  	\[
  	\sup _{|\xi|\sim 1}|G(\xi)|\lesssim e^{-\lambda^2 \left|t(x)-t(y)\right|^\gamma } \leq\frac{C_\beta^{'}}{\left( \lambda^2 \left|t(x)-t(y)\right|^\gamma \right)^\beta}.
  	\]
  	As for the second term, we can choose $\beta^{\prime}=\beta+1$ such that
  	\[
  	\begin{aligned}
  		\int_{|\xi|\sim 1}\left|G^{\prime}(\xi)\right| d \xi&\lesssim \int_{|\xi|\sim 1}\sup _{|\xi|\sim 1}|G(\xi)|d\xi+\int_{|\xi|\sim 1} \frac{ C_{\beta^{\prime}}^{\prime}\lambda^{2}\left(t(x)^\gamma +t(y)^\gamma \right)}{\left( \lambda^2 \left(t(x)^\gamma +t(y)^\gamma \right) \right)^{\beta+1}}d\xi\\
  		&\lesssim \frac{C_{\beta}^{\prime}}{\left( \lambda^2 \left|t(x)-t(y)\right|^\gamma \right)^\beta},
  	\end{aligned}
  	\]
  	which completes the proof of (\ref{supG}). \\
  Recalling \( |x-y| \lesssim \left|t(x)-t(y)\right|^{\alpha} \), we combine (\ref{K2.1}) and (\ref{supG}) to get
    		\begin{equation}
  		\begin{aligned}\label{in2.1}
  			\left|K\left(x, y, t(x), t(y)\right)\right|  &\lesssim  \frac{\lambda}{\left(1+\lambda^{2}\left|t(x)-t(y)\right|\right)^{1 / 2}} \left( \sup _{|\xi|\sim 1}|G(\xi)|+\int_{|\xi|\sim 1}\left|G^{\prime}(\xi)\right| d \xi\right) \\
  			&\lesssim \min{\left\{  \frac{1}{\left|t(x)-t(y)\right|^{\frac{1}{2}}}, \lambda  \right\}}
  		 \frac{1}{\lambda^{2\beta_1}\left|t(x)-t(y)\right|^{\gamma \beta_1}}\\
  			& \lesssim \min{\left\{   \frac{\lambda^{-2\beta_1}}{\left|x-y\right|^{\frac{\gamma \beta_1}{\alpha}+\frac{1}{2\alpha}}}, \frac{\lambda^{1-2\beta_1}}{{\left|x-y\right|}^{\frac{\gamma \beta_1}{\alpha}}}  \right\}} .\\
  		\end{aligned}
  	\end{equation}\\
 for any $\beta_1\geq 0$.
 	
    For the second case \( |x-y| \gg\left|t(x)-t(y)\right|^{\alpha} \), we still need to consider two cases 
   \(|x-y| \gg \lambda \left|t(x)-t(y)\right|\) and  \(|x-y| \lesssim \lambda \left|t(x)-t(y)\right| \). If \( |x-y| \gg \lambda\left|t(x)-t(y)\right| \), noticing $\left|\frac{d}{d \xi} \varphi(\xi)\right|\gtrsim \lambda|x-y|$, hence for any $N$ we
   have 
   \[ \left|K\left(x, y, t(x), t(y)\right)\right| \lesssim \frac{\lambda}{(1+\lambda|x-y|)^{N}}\]
by non stationary phase method. If \( |x-y|\lesssim \lambda\left|t(x)-t(y)\right| \), by van der Corput's lemma again for any $\beta_2 \geq 0$, we have
  \begin{equation}\label{in2.2}
\begin{aligned}
\left|K\left(x, y, t(x), t(y)\right)\right| \lesssim &\frac{ \lambda}{\left(1+\lambda^{2}\left|t(x)-t(y)\right|\right)^{1 / 2}}\left( \sup _{|\xi|\sim 1}|G(\xi)|+\int_{|\xi|\sim 1}\left|G^{\prime}(\xi)\right| d \xi\right) \\
\lesssim &\frac{\lambda^{\frac{1}{2}-2\beta_2+\gamma\beta_2}}{|x-y|^{\frac{1}{2}+\gamma\beta_2}}.
\end{aligned}
\end{equation}	\\
  	
Combining estimates (\ref{in2.1}) and (\ref{in2.2}) above, we finish the proof of (\ref{estimate}).\\
  
  	Using  (\ref{estimate}), we have
  	\begin{equation}\label{K}
  		\begin{aligned}
  			I(x)= &\int_{[-1,1]}\left|K(x, y,t(x),t(y))\right| d y \\
  		\leq&\int_{[-1,1]}  \min{\left\{   \frac{1}{\lambda^{2\beta_1}\left|x-y\right|^{\frac{\gamma \beta_1}{\alpha}+\frac{1}{2\alpha}}}, \frac{\lambda^{1-2\beta_1}}{{\left|x-y\right|}^{\frac{\gamma \beta_1}{\alpha}}}  \right\}}+\frac{\lambda^{\frac{1}{2}-2\beta_2+\gamma\beta_2}}{|x-y|^{\frac{1}{2}+\gamma\beta_2}}dy \\
  		    		\lesssim &\int_{\substack{\ |x-y|\lesssim   \lambda^{-2\alpha}   }}\frac{\lambda^{1-2\beta_1}}{\left|x-y\right|^{\frac{\gamma \beta_1}{\alpha}}}  d y+\int_{\substack{\lambda^{-2\alpha} \ll |x-y|\lesssim 1  }}	\frac{\lambda^{-2\beta_1}}{\left|x-y\right|^{\frac{\gamma \beta_1}{\alpha}+\frac{1}{2\alpha}}} d y\\&+\int_{\substack{0 \leq |x-y|\lesssim 1 }} \frac{\lambda^{\frac{1}{2}-2\beta_2+\gamma\beta_2}}{|x-y|^{\frac{1}{2}+\gamma\beta_2}} dy  .   \\
  		%	\lesssim & \lambda^{1-2\alpha+2\beta_1(\gamma-1)}	+\lambda^{\frac{1}{2}-2\beta_2}.
  		\end{aligned}
  	\end{equation}	
  	
 When $\alpha\in [\frac{1}{2},1], \gamma\in[1,2)$, recalling that $K\left(x, y, t(x), t(y)\right)\lesssim \lambda$, we can choose $\beta_1=0$, $\beta_2=\frac{1}{2\gamma}$ and  small enough $\varepsilon>0$  to get
 \[
 I(x) \lesssim_{\varepsilon}   \lambda^{1-2\alpha}+\lambda^{1-\frac{1}{\gamma}+\varepsilon}\lesssim_{\varepsilon}   \lambda^{1-\frac{1}{\gamma}+\varepsilon}.
 \]		
 Therefore we have
 \begin{equation}\label{schurtest}
 	\underset{x \in [-1,1]}{sup } \int\left|K(x, y)\right| d y \lesssim_{\varepsilon} \lambda^{1-\frac{1}{\gamma}+\varepsilon}.
 \end{equation}
 By symmetry we get the same bound for \( \underset{y\in[-1,1]}{\sup} \int\left|K(x, y)\right| d x \). Hence Schur's test gives the required bound (\ref{proposition1}) when $\gamma\in[1,2)$. 
 
 The following table presents under different  pairs of  $(\alpha, \gamma)$, what $(\beta_1,\beta_2)$  we  should choose to  ensure estimates of $I(x)$ are optimal. 
 \begin{table}[!ht]
 \renewcommand{\arraystretch}{1.3}
			\begin{tabular}{m{2.7cm}<{\centering}|m{2cm}<{\centering}|m{2.7cm}<{\centering}|m{2.7cm}<{\centering}|m{3.7cm}<{\centering}}   
				\hline   \textbf{Holder condition $ \alpha$} &\textbf{$\gamma$} & \textbf{$\beta_1$}& \textbf{$\beta_2$} &\textbf{Estimate of $I(x)$}\\     
				\hline
				\multirow{3}*{$\left[\frac{1}{2},1\right]$} & 
				$(0,1)$ &  $0 $ & $\frac{1}{2\gamma}$&$I(x)\lesssim 1$\\ 
			       ~ & $[1,2)$ &  $0$ & $\frac{1}{2\gamma}$ &$I(x)\lesssim_\varepsilon \lambda^{1-\frac{1}{\gamma}+\varepsilon}$ \\
				~ & $[2,\infty)$ & 0 & 0 &$I(x)\lesssim \lambda^{\frac{1}{2}}$\\
				\hline
				\multirow{3}*{$\left(0,\frac{1}{4}\right]$} & $(0,2\alpha)$ &  $\frac{\alpha}{\gamma}$ & $\frac{1}{2\gamma}$ &$I(x)\lesssim 1$ \\ 
								~ & $[2\alpha,1)$ & $\frac{\alpha}{\gamma}$ & $\frac{1}{2\gamma}$ &$I(x)\lesssim_\varepsilon \lambda^{1-\frac{2\alpha}{\gamma}+\varepsilon}$ \\
				~ & $[1,\infty)$ & 0 & 0 &$I(x)\lesssim \lambda^{1-2\alpha}$\\
				\hline
				\multirow{5}*{$\left(\frac{1}{4},\frac{1}{2}\right)$} & $(0,2\alpha)$ &  $\frac{\alpha}{\gamma}$ & $\frac{1}{2\gamma}$ &$I(x)\lesssim 1$\\ 
								~ & $[2\alpha,1)$&  $\frac{\alpha}{\gamma}$ & $\frac{1}{2\gamma}$&$I(x)\lesssim_\varepsilon \lambda^{1-\frac{2\alpha}{\gamma}+\varepsilon}$ \\
				~ & $[1,\frac{1}{2\alpha})$ & $0$ & $\frac{1}{2\gamma}$ &$I(x)\lesssim \lambda^{1-2\alpha}$\\
				~ & $[\frac{1}{2\alpha},2)$ & $0$ & $\frac{1}{2\gamma}$ &$I(x)\lesssim_\varepsilon \lambda^{1-\frac{1}{\gamma}+\varepsilon}$\\
				~ & $[2,\infty)$ & 0 & 0 &$I(x)\lesssim \lambda^{\frac{1}{2}}$\\
				\hline
							\end{tabular}   
		\end{table}

By symmetry we get the same bound for \( \underset{y\in[-1,1]}{\sup} \int\left|K(x, y)\right| d x \).  Thus we finish the proof of Proposition \ref{proposition1}, Proposition \ref{proposition2} and Proposition \ref{proposition3}.

\section{Necessary conditions}\label{chap3}
In order to prove necessity in Theorem \ref{theorem1}, Theorem   \ref{theorem2} and Theorem  \ref{theorem3} , we
use arguments from Niki\v{s}hin-Stein theory, which means we only need to construct counterexamples of maximal functions. Here are the two counterexamples.
\begin{theorem}\label{theorem3.1}
	Let  $\Gamma(x,t)=x-t^{\alpha}$ with $\alpha\in(0,1]$. The following maximal estimate 
		\begin{equation}\label{3.1}
		\left\|\sup _{t \in [0,1] }\left|P_{\gamma} f(\Gamma(x,t),t)\right|\right\|_{L^{2}([-1,1])} \lesssim 
		\left|\left|f\right|\right|_{H^{s}}
	\end{equation}
	 yields 
	 \begin{equation}
	 	s\geq\begin{cases}
	 		\max\left\{\frac{1}{2}-\frac{\alpha} {\gamma},0\right\}   \quad   & \gamma \in(0,1),  \\
	 		\max\left\{\frac{1}{2}-\alpha,0\right\}      \quad   & \gamma \in[1,\infty).\
	 	\end{cases}
	 \end{equation}

\end{theorem}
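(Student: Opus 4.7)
The approach is a standard counterexample construction. Take $\widehat{f_\lambda}(\xi) = \chi_{[\lambda,2\lambda]}(\xi)$, so $\|f_\lambda\|_{H^s} \sim \lambda^{s+1/2}$. Since $\Gamma(x,t) = x - t^\alpha$,
\[
P_\gamma f_\lambda(\Gamma(x,t),t) = \int_\lambda^{2\lambda} e^{i((x-t^\alpha)\xi + t\xi^2)} e^{-t^\gamma \xi^2}\, d\xi.
\]
The plan is to choose a measurable selector $t=t(x)$ and a set $E \subset [-1,1]$ of positive measure so that for $x \in E$ the phase is essentially constant across $[\lambda,2\lambda]$ and the Gaussian factor is bounded below, thereby forcing $|P_\gamma f_\lambda(\Gamma(x,t(x)),t(x))| \gtrsim \lambda$. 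Substituting into (\ref{3.1}) and comparing $\lambda |E|^{1/2}$ with $\lambda^{s+1/2}$ as $\lambda \to \infty$ then yields the desired necessary condition on $s$.

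To engineer this, observe that the phase $(x-t^\alpha)\xi + t\xi^2$ has second derivative $2t$ and first derivative $(x-t^\alpha)+2t\xi$. Three constraints produce a contribution of size $\sim \lambda$: (i) $t\lambda^2 \lesssim 1$ so the quadratic part varies by $O(1)$ over $[\lambda,2\lambda]$, (ii) $|x-t^\alpha + 2t\lambda| \lesssim \lambda^{-1}$ so the linear part also varies by $O(1)$, and (iii) $t^\gamma \lambda^2 \lesssim 1$ so that $e^{-t^\gamma \xi^2} \gtrsim 1$ on the support. For $\gamma \geq 1$ the binding upper bound on $t$ is (i), namely $t \lesssim \lambda^{-2}$; for $\gamma < 1$ it is (iii), namely $t \lesssim \lambda^{-2/\gamma}$. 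In the non-trivial regimes (which are exactly those where the claimed threshold is positive, forcing $\alpha < 1/2$), a direct comparison gives $2t\lambda \ll t^\alpha$, so (ii) reduces to $x \approx t^\alpha$. I would then set $t(x) := x^{1/\alpha}$ and take $E = (0, c\lambda^{-2\alpha})$ when $\gamma \geq 1$ and $E = (0, c\lambda^{-2\alpha/\gamma})$ when $\gamma < 1$, with $c > 0$ chosen once and for all small enough that (i)--(iii) hold simultaneously on $E$.

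Assuming the pointwise lower bound $|P_\gamma f_\lambda(\Gamma(x,t(x)),t(x))| \gtrsim \lambda$ on $E$, the left side of (\ref{3.1}) is $\gtrsim \lambda |E|^{1/2}$, which evaluates to $\lambda^{1-\alpha}$ when $\gamma \geq 1$ and to $\lambda^{1 - \alpha/\gamma}$ when $\gamma < 1$. Comparing with $\|f_\lambda\|_{H^s} \sim \lambda^{s+1/2}$ and letting $\lambda \to \infty$ forces $s \geq \tfrac{1}{2} - \alpha$ (resp.\ $s \geq \tfrac{1}{2} - \alpha/\gamma$). When these thresholds are non-positive, the bound $s \geq 0$ comes from the trivial observation that $\sup_{t \in [0,1]} |P_\gamma f(\Gamma(x,t),t)| \geq |P_\gamma f(x,0)| = |f(x)|$, so (\ref{3.1}) forces $\|f\|_{L^2([-1,1])} \lesssim \|f\|_{H^s}$, which already requires $s \geq 0$. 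The main technical step is verifying $|P_\gamma f_\lambda| \gtrsim \lambda$ with an absolute constant on $E$: this reduces to a Taylor expansion of the phase around $\xi = \lambda$, using (i)--(ii) to control the oscillation and (iii) to lower bound the Gaussian, and taking $c$ small enough makes the real part of the integrand strictly positive across $[\lambda,2\lambda]$.
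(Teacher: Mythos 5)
Your proposal is correct and follows essentially the same construction as the paper: a Knapp-type example concentrated on a dyadic frequency block, with the selector $t(x)=x^{1/\alpha}$ annihilating the linear term, the set of $x$ chosen so that both the quadratic phase $t\lambda^2$ and the dissipative factor $t^\gamma\lambda^2$ are $O(1)$, and $t=0$ giving the trivial bound $s\geq 0$. The only cosmetic difference is that the paper uses a normalized smooth bump $\widehat{f_R}(\eta)=R^{-1}g(\eta/R)$ (so $|P_\gamma f_R|\gtrsim 1$) rather than the indicator $\chi_{[\lambda,2\lambda]}$ (so $|P_\gamma f_\lambda|\gtrsim\lambda$); the two normalizations produce identical exponents after matching Sobolev norms.
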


	\begin{proof}
		Let \( g \in  C_0^{\infty} \) is a standard bump function with supp $g\subset [0,\frac{1}{2}]$ such that \( g(\xi) \geq 0 \) and \( \int g(\xi) d \xi=1 \). We construct a family of functions \( f_{ R} \) with large \( R  \), defined by
		\[
		\widehat{f}_{R}(\eta)=\frac{1}{R} g\left( \frac{\eta}{R}\right).
		\]
		It is easy to calculate the Sobolev norm of ${f}_{R}$
			\[			
		\left\|f_{R}\right\|_{H^{s}} \lesssim R^{s-\frac{1}{2}}.
		\]
		After scaling  $\frac{\eta}{R}\to \xi$, we have
		\[
		\begin{aligned}
	\left|P_{\gamma} f(\Gamma(x,t),t)\right|= &\left|\int e^{i\left((x-t^{\alpha})  \eta+ t|\eta|^{2}\right)} e^{-t^{\gamma}|\eta|^{2}}\frac{1}{R} g\left( \frac{\eta}{R}\right) \frac{d \eta}{2 \pi}\right|\\
	=&\left|\int e^{i\left((x-t^{\alpha})  R \xi+ tR^{2}|\xi|^{2}\right)} e^{-t^{\gamma}R^{2}|\xi|^{2}}g(\xi) \frac{d \xi}{2 \pi}\right|.
\end{aligned}	
\]
Set 
\[
\phi_R(\xi,x,t)=(x-t^{\alpha})R\xi+tR^2\xi^2,\quad \Phi_R(\xi,t)=-t^{\gamma}R^2|\xi|^{2}.
\]
Next we will choose suitable $t$ depending on $x$, which we label $t_x$ to ensure  $\phi_R(\xi,x,t)$ and $\Phi_R(\xi,t)$ considerablely small.
When $\gamma\in(0,1)$, one can find $t^{\gamma}R^2 \geq tR^2$ for $t\in[0,1]$. For some suitably small constant c, let
\[
A=\{x:0 <x<cR^{-\frac{2\alpha}{\gamma}}\}.
\]
Set 
\begin{equation}\label{3.3}
	t_x=x^{\frac{1}{\alpha}},
\end{equation}
 we can find $t_x\in(0,cR^{-\frac{2}{\gamma}})$ to have  $t_xR^{2}|\xi|^2\leq c$ and  $t_x^\gamma R^{2}|\xi|^2\leq c$. Thus
		\begin{equation}\label{3.4}
			\begin{aligned}
			\sup _{t\in[0,1]}\left|P_{\gamma} f(\Gamma(x,t),t)\right|
			& \geq\left|P_{\gamma} f(\Gamma(x,t_x),t_x)\right| \\
			& \geq \left|\int g(\xi)\frac{d \xi}{2 \pi}\right|-\int \left|e^{i\phi_R(\xi,x,t_x)} e^{\Phi_R(\xi,t_x)}-1\right| g(\xi) \frac{d \xi}{2 \pi} \\
			 & \geq \frac{1}{4\pi}.
			\end{aligned}
	\end{equation}
Let $R \to \infty$, (\ref{3.1}) yields\\
		 \begin{equation}\label{3.5}
		 	s\geq\frac{1}{2}-\frac{\alpha}{\gamma}.
		 \end{equation}\\
	 
		When $\gamma\in [1,\infty)$, we can find $t^{\gamma}R^2 \leq tR^2$. Consider \[
		B=\{x:0 <x<cR^{-2\alpha}\}.
		\]Set (\ref{3.3}) as before, we can find  $\phi_R(\xi,x,t)$ and $\Phi_R(\xi,t)$ small enough to get (\ref{3.4}). Let $R \to \infty$, (\ref{3.1}) yields\\
		\begin{equation}\label{3.6}
			s\geq\frac{1}{2}-\alpha.
		\end{equation}\\
	Finally, set $t=0$ and $x<\frac{c}{R}$, we have (\ref{3.4}) as well to get 
	\begin{equation}\label{3.7}
		s\geq0.
	\end{equation}\\
  Combing (\ref{3.5}), (\ref{3.6}) and (\ref{3.7}), we finish the proof.
	\end{proof}

\begin{theorem}\label{theorem3.2}
	Let $\gamma\in[1,\infty)$, $\Gamma(x,t)=x-t^{\alpha}$ with $\alpha\in(\frac{1}{4},1]$. The  maximal estimate (\ref{3.1}) yields  \begin{equation}
		s\geq\begin{cases}
			\frac{1}{2}(1-\frac{1} {\gamma})   \quad   & \gamma \in\left[\max\{\frac{1}{2\alpha},1\},2\right),  \\
			\frac{1}{4}      \quad   & \gamma \in[2,\infty).\
		\end{cases}
	\end{equation}
\end{theorem}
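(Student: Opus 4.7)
The plan is to adapt the Dahlberg--Kenig Knapp-type counterexample to the complex-time setting, tuning the frequency-localization scale to the damping factor $e^{-t^\gamma \xi^2}$. Fix a nonnegative bump $g \in C_0^\infty$ with $\int g = 1$ and $\operatorname{supp} g \subset [-\tfrac{1}{2}, \tfrac{1}{2}]$, and for large $\lambda$ and an exponent $a \in (0, 1/2]$ to be chosen, define $f_\lambda$ by $\widehat{f_\lambda}(\xi) = g\bigl(\lambda^{-a}(\xi - \lambda)\bigr)$. Then $\operatorname{supp} \widehat{f_\lambda} \subset [\lambda - \tfrac{1}{2}\lambda^a, \lambda + \tfrac{1}{2}\lambda^a]$ and a direct computation gives $\|f_\lambda\|_{H^s} \sim \lambda^{s + a/2}$.

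After the substitution $\xi = \lambda + \lambda^a \eta$, one finds
\begin{equation*}
\bigl|P_\gamma f_\lambda(\Gamma(x,t),t)\bigr| = \frac{\lambda^a}{2\pi}\bigl|\int e^{i[(x - t^\alpha + 2t\lambda)\lambda^a \eta + t\lambda^{2a}\eta^2]}\, e^{-t^\gamma(\lambda + \lambda^a \eta)^2}\, g(\eta)\, d\eta\bigr|.
\end{equation*}
Mimicking the proof of Theorem \ref{theorem3.1}, for each $x$ I choose a time $t_x$ so that the three quantities $|x - t_x^\alpha + 2t_x\lambda|\lambda^a$, $t_x \lambda^{2a}$, and $t_x^\gamma \lambda^2$ are each $\lesssim 1$; the triangle-inequality argument culminating in (\ref{3.4}) then gives $|P_\gamma f_\lambda(\Gamma(x,t_x), t_x)| \gtrsim \lambda^a$. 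For $\gamma \in [\max\{\tfrac{1}{2\alpha},1\}, 2)$ I would pick $a = 1/\gamma$ and $t_x \approx -x/(2\lambda)$, so that $t_x$ ranges in $(0, c\lambda^{-2/\gamma}]$; the hypothesis forces $\gamma \geq 2(1-\alpha)$, hence $2t_x\lambda$ dominates $t_x^\alpha$, and $x$ sweeps an interval of length $\sim \lambda^{1 - 2/\gamma} \leq 1$ inside $[-1,1]$. For $\gamma \in [2,\infty)$ the damping constraint is automatic, and I would take the classical Dahlberg--Kenig choice $a = 1/2$ with $t_x \sim \lambda^{-1}$, obtaining an admissible $x$-set of measure $\sim 1$.

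In both regimes, the product of the pointwise bound $\lambda^a$ and the square root of the admissible measure equals $\lambda^{1/2}$, so
\begin{equation*}
\lambda^{1/2} \lesssim \bigl\|\sup_{t \in [0,1]} |P_\gamma f_\lambda(\Gamma(x,t), t)|\bigr\|_{L^2([-1,1])} \lesssim \|f_\lambda\|_{H^s} \sim \lambda^{s + a/2}.
\end{equation*}
Letting $\lambda \to \infty$ yields $s \geq \tfrac{1}{2} - a/2$, which is $\tfrac{1}{2}(1 - 1/\gamma)$ in the first range and $1/4$ in the second. I expect the main obstacle to be identifying the optimal window exponent $a = \min\{1/\gamma, 1/2\}$: enlarging $a$ destroys the stationary-phase regime by blowing up either the quadratic phase $t\lambda^{2a}\eta^2$ or the damping $t^\gamma \lambda^2$, while shrinking $a$ reduces the pointwise bound $\lambda^a$ faster than it enlarges the admissible $x$-set, so the claimed balance is essentially forced.
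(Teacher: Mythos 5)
Your construction is correct and is, up to a change of parameters, exactly the paper's: writing $R=\lambda^a$ and $R^b=\lambda$ (so $b=1/a$), your wave packet with Fourier support of width $\lambda^a$ centred at $\pm\lambda$ is the paper's $\widehat f_R=\tfrac1R g((\eta+R^b)/R)$ with $b=\gamma$ for $\gamma\in[\max\{\tfrac1{2\alpha},1\},2)$ and $b=2$ for $\gamma\ge2$, and your choice of $t_x$ solving $x-t_x^\alpha+2t_x\lambda=0$ (approximately $-x/(2\lambda)$) is the paper's $x-t_x^\alpha-2R^bt_x=0$ with the opposite sign of the frequency centre, which simply flips the admissible $x$-interval to the negative side. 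Your verification that $\gamma\ge\max\{\tfrac1{2\alpha},1\}$ implies $\gamma\ge2-2\alpha$, hence that the linear term dominates $t_x^\alpha$ and the admissible set has measure $\sim\lambda^{1-2/\gamma}$ (resp.\ $\sim1$), reproduces the paper's observation $R^{-2\alpha}\le R^{\gamma-2}$ (resp.\ $\le1$), and the final scaling $\lambda^{1/2}\lesssim\lambda^{s+a/2}$ is identical to the paper's $R^{(b-2)/2}\lesssim R^{bs-1/2}$ after substitution.
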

\begin{proof}
	Let  \( g \in  C_0^{\infty} \) is a standard bump function with supp $g\subset [0,\frac{1}{2}]$ such that \( g(\xi) \geq 0 \) and \( \int g(\xi) d \xi=1 \). We construct a family of functions \( f_{ R} \) with large \( R  \), defined by
	\[
	\widehat{f}_{R}(\eta)=\frac{1}{R} g\left( \frac{\eta+R^b}{R}\right).
	\]
		Here $b\in[1,\infty)$. It is easy to calculate the Sobolev norm of ${f}_{R}$
	\[			
	\left\|f_{R}\right\|_{H^{s}} \lesssim R^{bs-\frac{1}{2}}.
	\]
	After scaling  $\frac{\eta+R^b}{R}\to\xi$, we have
	\[
	\begin{aligned}
		\left|P_{\gamma} f(\Gamma(x,t),t)\right|= &\left|\int e^{i\left((x-t^{\alpha})  \eta+ t|\eta|^{2}\right)} e^{-t^{\gamma}|\eta|^{2}}\frac{1}{R} g\left( \frac{\eta+R^b}{R}\right) \frac{d \eta}{2 \pi}\right|\\
		=&\left|\int e^{i\left((x-t^{\alpha})  (R \xi-R^b)+t|R\xi-R^b|^2\right)} e^{-t^{\gamma}|R\xi-R^b|^{2}}g(\xi) \frac{d \xi}{2 \pi}\right|\\
		=&\left|\int e^{i\left((x-t^{\alpha}-2R^bt)  R \xi+t|R\xi|^2\right)} e^{-t^{\gamma}|R\xi-R^b|^{2}}g(\xi) \frac{d \xi}{2 \pi}\right|.
	\end{aligned}	
	\]
	Set\[ \phi_R(\xi,x,t)=(x-t^{\alpha}-2R^bt)R\xi+tR^2\xi^2,    
	   \Phi_R(\xi,t)=-t^{\gamma}|R\xi-R^b|^{2}.\]
	When $\gamma\in[\max\{\frac{1}{2\alpha},1\},2)$, we set $b=\gamma$. For
	  \[x\in
	A=\{x:0 <x<c^\alpha R^{-2\alpha}+2cR^{\gamma-2}\},
	\]
we can find a $t_x\in(0,cR^{-2})$ satisfying 
\[x-t_x^{\alpha}-2R^\gamma t_x=0.\] 
Thus we have  $\phi_R(\xi,x,t_x)$ and $\Phi_R(\xi,t_x)$ small enough to get (\ref{3.4}) as well on $x\in A$.  Notice that $R^{-2\alpha} \leq R^{\gamma-2}$, $|A| \geq 2cR^{\gamma-2}$.  Let $R \to \infty$, (\ref{3.1}) yields
\begin{equation}\label{3.9}
	s\geq \frac{1}{2}\left(1-\frac{1}{\gamma}\right).
\end{equation}\\
	When $\gamma\in[2,\infty)$, we set $b=2$.  For
	\[x\in
	B=\{x:0 <x<c^\alpha R^{-2\alpha}+2c\},
	\]
	we can find a $t_x\in(0,cR^{-2})$ satisfying 
	\[x-t_x^{\alpha}-2R^2 t_x=0.\]
	Thus we have  $\phi_R(\xi,x,t_x)$ and $\Phi_R(\xi,t_x)$ small enough to get (\ref{3.4}) as well on $x\in B$. Notice that $R^{-2\alpha} \leq 1$ and $|B| \geq 2c$.  Let $R \to \infty$, (\ref{3.1}) yields
	\begin{equation}\label{3.10}
	s\geq \frac{1}{4}.
\end{equation}\\
	Combing (\ref{3.9}), (\ref{3.10}), we finish the proof.
	\end{proof}

\section{Convergence  of fractional Schrödinger operator with complex time along curves}\label{chap4}
The similar method can also handle the convergence  of fractional Schrödinger operator with complex time along curves. Here, we only present results while omit the proof.

When $m\in(0,1)$, we have the following two theorems.
\begin{theorem}\label{theorem4.1}
	Let $m\in(0,1)$ and $\gamma\in(0,\infty)$. For any curve  $\Gamma(x,t): \mathbb{R}\times[-1,1] \rightarrow \mathbb{R} $ which is bilipschitz continuous in \( x \) and  $\alpha$-Hölder continuous in \( t \) with \(  0< \alpha \leq \frac{1}{2}\),
	we have
	\begin{equation}\label{pointwise convergence mcass}
		\lim _{t \rightarrow 0} P^m_{\gamma} f(\Gamma(x,t),t)=f(x) \text { a.e. } x \in \mathbb{R} ,\quad \forall f \in H^{s}\left(\mathbb{R}\right)
	\end{equation}
	whenever  $ s>s(\gamma)=\min \left\{\frac{1}{2 }(1-m\alpha), \frac{1}{2 }\left(1-\frac{m\alpha}{\gamma} \right)^{+} \right\} $. Conversely, there exists a curve satisfying the previous assumptions, while (\ref{pointwise convergence mcass}) fails whenever $s<s(\gamma)$. 
\end{theorem}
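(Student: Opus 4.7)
The plan is to follow the framework of Sections~\ref{chap2} and \ref{chap3} verbatim, replacing the dispersion $|\xi|^2$ with $|\xi|^m$, $m\in(0,1)$, and tracking how the exponents shift. For sufficiency, a smooth approximation together with a Littlewood--Paley decomposition reduces the convergence statement to the frequency-localized maximal bound
\[
\Bigl\|\sup_{t\in[0,1]}|P^m_\gamma f(\Gamma(x,t),t)|\Bigr\|_{L^2([-1,1])}\lesssim_\varepsilon\lambda^{s(\gamma)+\varepsilon}\|f\|_{L^2}
\]
for $f$ with $\widehat f$ supported in $|\xi|\sim\lambda$. Linearizing the supremum by a measurable $x\mapsto t(x)$, applying $TT^*$, and rescaling $\xi\to\lambda\xi$, the object to control is $K(x,y)=\lambda\int e^{i\varphi(\xi)}G(\xi)\,d\xi$ with phase $\varphi(\xi)=\lambda(\Gamma(x,t(x))-\Gamma(y,t(y)))\xi+\lambda^m(t(x)-t(y))|\xi|^m$ and amplitude $G(\xi)=e^{-\lambda^m(t(x)^\gamma+t(y)^\gamma)|\xi|^m}\Psi(\xi)$.

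The three quantitative inputs to be used are: (a) van der Corput with $|\varphi''(\xi)|\sim m(1-m)\lambda^m|t(x)-t(y)|$ on $|\xi|\sim1$, nondegenerate precisely because $m\in(0,1)$; (b) non-stationary phase whenever $|x-y|\gg\lambda^{m-1}|t(x)-t(y)|$, which is the natural replacement of the threshold $|x-y|\gg\lambda|t(x)-t(y)|$ used in the $m=2$ case; and (c) the damping estimate $\sup|G|+\int|G'|\lesssim_\beta(\lambda^m|t(x)-t(y)|^\gamma)^{-\beta}$ for every $\beta\ge0$, derived exactly as \eqref{supG}. Assembling these in the two regimes $|x-y|\lesssim|t(x)-t(y)|^\alpha$ and $|x-y|\gg|t(x)-t(y)|^\alpha$ yields the analogue of \eqref{estimate}, namely
\[
|K(x,y)|\lesssim_{\beta_1,\beta_2}\max\!\left\{\min\!\Bigl\{\tfrac{\lambda^{1-m/2-m\beta_1}}{|x-y|^{\gamma\beta_1/\alpha+1/(2\alpha)}},\,\tfrac{\lambda^{1-m\beta_1}}{|x-y|^{\gamma\beta_1/\alpha}}\Bigr\},\,\tfrac{\lambda^{1/2-m\beta_2+(m-1)\gamma\beta_2}}{|x-y|^{1/2+\gamma\beta_2}}\right\}
\]
for all $\beta_1,\beta_2\ge0$, which specializes to \eqref{estimate} when $m=2$. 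Schur's test, organized as in \eqref{K} with crossover at $|x-y|\sim\lambda^{-m\alpha}$, then produces $\sup_x\int|K(x,y)|dy\lesssim_\varepsilon\lambda^{2s(\gamma)+\varepsilon}$ with the following choices: $\beta_1$ taken large for $\gamma\in(0,m\alpha)$; $(\beta_1,\beta_2)=(\alpha/\gamma,1/(2\gamma))$ for $\gamma\in[m\alpha,1)$, yielding $\lambda^{1-m\alpha/\gamma+\varepsilon}$; and $(\beta_1,\beta_2)=(0,0)$ for $\gamma\ge1$, yielding $\lambda^{1-m\alpha}$.

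For necessity I would copy Theorem~\ref{theorem3.1} with $\Gamma(x,t)=x-t^\alpha$ and $\widehat{f_R}(\eta)=R^{-1}g(\eta/R)$, so that $\|f_R\|_{H^s}\lesssim R^{s-1/2}$. After scaling $\eta\to R\xi$ the phase becomes $(x-t^\alpha)R\xi+tR^m|\xi|^m$ and the damping $e^{-t^\gamma R^m|\xi|^m}$; the choice $t_x=x^{1/\alpha}$ cancels the linear part, and the residual phase and damping are $\lesssim1$ provided $x^{m/\alpha}R^m\lesssim1$ and $x^{\gamma/\alpha}R^m\lesssim1$ respectively. For $\gamma\in[m\alpha,1)$ the damping condition binds first, giving the set $\{|x|\lesssim R^{-m\alpha/\gamma}\}$ on which $\sup_t|P^m_\gamma f_R|\gtrsim 1$ and hence $s\ge\tfrac12(1-m\alpha/\gamma)$; for $\gamma\ge1$ the phase condition binds, giving $\{|x|\lesssim R^{-m\alpha}\}$ and $s\ge\tfrac12(1-m\alpha)$; the trivial choice $t=0$, $|x|\lesssim R^{-1}$ gives the baseline $s\ge0$ needed when $\gamma<m\alpha$. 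Combining these three matches $s(\gamma)=\min\{\tfrac12(1-m\alpha),\tfrac12(1-m\alpha/\gamma)^+\}$.

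The main technical obstacle I anticipate is the kernel bookkeeping: for $m\in(0,1)$ the natural scales $\lambda^m|t(x)-t(y)|$ and $\lambda^{m-1}|t(x)-t(y)|$ differ from their $m=2$ counterparts, and the crossover radius in Schur's test becomes $\lambda^{-m\alpha}$ in place of $\lambda^{-2\alpha}$. One must then verify in each $\gamma$-range that the chosen pair $(\beta_1,\beta_2)$ optimizes Schur's test to exactly $2s(\gamma)$ and not a weaker exponent, in the spirit of the case-by-case table in the proof of Propositions~\ref{proposition1}--\ref{proposition3}.
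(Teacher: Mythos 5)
Your proposal is correct and follows exactly the route the paper prescribes: the authors explicitly omit the proof of Theorem~\ref{theorem4.1}, stating that the method is the same as for Theorems~\ref{theorem1}--\ref{theorem3}, and your adaptation tracks the exponent shifts ($\lambda^2\to\lambda^m$, non-stationary threshold $\lambda\to\lambda^{m-1}$, crossover $\lambda^{-2\alpha}\to\lambda^{-m\alpha}$) and the choices of $(\beta_1,\beta_2)$ correctly, giving $I(x)\lesssim_\varepsilon\lambda^{2s(\gamma)+\varepsilon}$ in each $\gamma$-regime; the counterexamples likewise mirror Theorem~\ref{theorem3.1}. One small slip: the phase smallness condition should read $x^{1/\alpha}R^m\lesssim1$ (since $t_x R^m=x^{1/\alpha}R^m$), not $x^{m/\alpha}R^m\lesssim1$, though the set $\{|x|\lesssim R^{-m\alpha}\}$ you deduce from it is correct.
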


\begin{theorem}\label{theorem4.2}
	Let $m\in(0,1)$ and $\gamma\in(0,\infty)$. For any curve  $\Gamma(x,t) $ which is bilipschitz continuous in \( x \) and   $\alpha$-Hölder continuous in \( t \) with $ \frac{1}{2} < \alpha \leq 1 $, we have convergence result 
	(\ref{pointwise convergence mcass}) whenever  $ s>s(\gamma)=\min\{\frac{2-m}{4}+\frac{m(1-2\alpha)}{4\gamma} , \frac{1}{2 }\left(1-\frac{m\alpha}{\gamma} \right)^{+} \}$. Conversely, there exists a curve satisfying the previous assumptions, while (\ref{pointwise convergence mcass}) fails whenever $s<s(\gamma)$.
	
\end{theorem}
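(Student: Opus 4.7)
The plan is to follow the same two-step strategy used for Theorems \ref{theorem1}--\ref{theorem3}: reduce pointwise convergence to a frequency-localized maximal inequality, then match it with explicit counterexamples.

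\textbf{Sufficiency.} After a standard smooth approximation and Littlewood--Paley decomposition it suffices to prove, for $\hat f$ supported in $\{|\xi|\sim\lambda\}$, that
\[
\Bigl\|\sup_{t\in[0,1]}|P^m_\gamma f(\Gamma(x,t),t)|\Bigr\|_{L^2([-1,1])}\lesssim_\varepsilon\lambda^{s(\gamma)+\varepsilon}\|f\|_{L^2}.
\]
Linearizing $t=t(x)$ and applying $TT^*$ reduces this to a pointwise bound on the kernel
\[
K(x,y)=\int e^{i[(\Gamma(x,t(x))-\Gamma(y,t(y)))\xi+(t(x)-t(y))|\xi|^m]}e^{-(t(x)^\gamma+t(y)^\gamma)|\xi|^m}\Psi(\xi/\lambda)\,d\xi.
\]
After the rescaling $\xi\to\lambda\xi$ the phase satisfies $|\varphi''|\gtrsim\lambda^m|\Delta t|$ on the support of the amplitude, and the damping factor carries $\lambda^m$ in place of $\lambda^2$; the geometric case split (on $|x-y|$ versus $|\Delta t|^\alpha$ and, in the latter case, versus $\lambda^{m-1}|\Delta t|$) is exactly as in (\ref{in2.1})--(\ref{in2.2}). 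Van der Corput's lemma, non-stationary phase, and the trivial inequality $e^{-y}\lesssim_\beta y^{-\beta}$ then yield an analogue of (\ref{estimate}) parameterized by free constants $\beta_1,\beta_2\ge 0$; feeding this into Schur's test as in (\ref{K})--(\ref{schurtest}) and optimizing $(\beta_1,\beta_2)$ regime by regime in $\gamma$ produces $\sup_x\int|K(x,y)|dy\lesssim_\varepsilon\lambda^{2s(\gamma)+\varepsilon}$, and symmetry closes the argument.

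\textbf{Necessity.} With the test curve $\Gamma(x,t)=x-t^\alpha$ I mimic Theorems \ref{theorem3.1}--\ref{theorem3.2}. The plain bump $\hat f_R(\eta)=R^{-1}g(\eta/R)$ with focusing time $t_x=x^{1/\alpha}$ is admissible on the set $\{0<x<cR^{-m\alpha/\gamma}\}$ for $\gamma\in(0,1]$, since both the phase $(x-t_x^\alpha)\eta+t_x|\eta|^m$ and the damping $t_x^\gamma|\eta|^m$ stay $O(1)$ there; sending $R\to\infty$ in (\ref{3.1}) forces $s\ge\frac{1}{2}(1-m\alpha/\gamma)^+$. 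The Knapp-type family $\hat f_R(\eta)=R^{-1}g((\eta+R^b)/R)$ localizes at frequency $\sim R^b$, where the group velocity of $t|\eta|^m$ is $\sim tR^{b(m-1)}$; solving $x-t_x^\alpha-mt_xR^{b(m-1)}=0$ for $t_x$ on an admissible set of size $\max\{R^{-2\alpha},R^{b(m-1)}\}$ subject to $t_x^\gamma R^{mb}\lesssim 1$, and then optimizing $b$ against $\gamma$, produces the second branch $s\ge\frac{2-m}{4}+\frac{m(1-2\alpha)}{4\gamma}$.

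\textbf{Main obstacle.} The delicate step is the Knapp construction in the sub-quadratic regime $m<1$. Unlike the Schrödinger case of Theorem \ref{theorem3.2} the group velocity exponent $b(m-1)$ is now negative, so enlarging $b$ \emph{shrinks} the admissible $x$-range rather than enlarging it; the damping constraint $t_x\lesssim R^{-mb/\gamma}$ pulls $b$ in the opposite direction from the focusing constraint. The hypothesis $\alpha>\frac12$ enters precisely to make the optimum of this two-sided balance lie on the branch $\frac{2-m}{4}+\frac{m(1-2\alpha)}{4\gamma}$ rather than on a trivial $s=0$, and carrying out this optimization carefully is the only new ingredient beyond the $m=2$ analysis of Section \ref{chap2}.
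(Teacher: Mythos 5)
The paper does not actually prove Theorem~\ref{theorem4.2}: Section~\ref{chap4} states the results and explicitly omits all arguments, deferring to ``the similar method.'' So there is no paper proof to compare against beyond the $m=2$ template of Sections~\ref{chap2}--\ref{chap3}, and your proposal is precisely the intended adaptation of that template.

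Your sufficiency outline is correct. After rescaling, $|\varphi''|\sim\lambda^m|\Delta t|$ since $|\partial_\xi^2|\xi|^m|\gtrsim 1$ on $|\xi|\sim1$ for $m\neq1$, the damping exponent is $\lambda^m$, and the three-way case split on $|x-y|$ vs.\ $|\Delta t|^\alpha$ vs.\ $\lambda^{m-1}|\Delta t|$ is the right one. Carrying out the Schur-test computation, the two case-1 integrals both give $\lambda^{1-m\alpha+m\beta_1(\gamma-1)}$-type terms, and the choice $\beta_1=(2\alpha-1)/(2\gamma)$ (using $\alpha>1/2$) balances them at $\lambda^{1-\frac{m}{2}+\frac{m(1-2\alpha)}{2\gamma}}$, while $\beta_2\approx\frac{1}{2\gamma}$ controls the third term; for $\gamma<1$, $\beta_1\to\alpha/\gamma$ instead gives $\lambda^{1-m\alpha/\gamma}$. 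This matches $\lambda^{2s(\gamma)}$. Your necessity outline is also sound in its essentials. The first construction with $t_x=x^{1/\alpha}$ on $\{0<x<cR^{-m\alpha/\gamma}\}$ yields $s\ge\frac12(1-m\alpha/\gamma)^+$, and the Knapp family at frequency $\sim R^b$ with $b=\frac{2\gamma}{2\gamma-m\gamma+m}$ (forced by equating the Taylor constraint $t\lesssim R^{b(2-m)-2}$ and the damping constraint $t\lesssim R^{-bm/\gamma}$) gives $s\ge\frac{2-m}{4}+\frac{m(1-2\alpha)}{4\gamma}$; your observation that $\alpha>1/2$ pushes the optimum of $b$ away from $b=1$ is exactly the right mechanism. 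One small inaccuracy: you write that the admissible set has size $\max\{R^{-2\alpha},R^{b(m-1)}\}$, but with $T$ the largest admissible time the set size is $\max\{T^\alpha,TR^{b(m-1)}\}$ — and here $T=R^{-bm/\gamma}$, not $R^{-2}$ as in the $m=2$ case, so the displayed expression is off. This is a slip in the description, not in the logic; the constraints you list do give the correct $T$ and the correct optimization.
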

When $m=1$, we have the following theorem.
\begin{theorem}\label{theorem4.3}
	Let $m=1$ and   $\gamma\in(0,\infty)$. For any curve  $\Gamma(x,t) $ which is bilipschitz continuous in \( x \) and   $\alpha$-Hölder continuous in \( t \) with $ \alpha\in (0,1]$, we have convergence result 
	(\ref{pointwise convergence mcass}) whenever  $  s>s(\gamma)=\frac{1}{2 }\left(1-\frac{\alpha}{\gamma} \right)^{+}$. Conversely, there exists a curve satisfying the previous assumptions, while (\ref{pointwise convergence mcass}) fails whenever $s<s(\gamma)$. 
\end{theorem}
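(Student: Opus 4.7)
The plan is to follow the scheme of Sections \ref{chap2} and \ref{chap3} adapted to the linear dispersion $m=1$. By a standard smooth approximation and Littlewood--Paley decomposition, sufficiency in Theorem \ref{theorem4.3} reduces to the frequency-localized maximal estimate
\[
\left\|\sup_{t\in[0,1]}\bigl|P^1_\gamma f(\Gamma(x,t),t)\bigr|\right\|_{L^2([-1,1])} \lesssim_\varepsilon \lambda^{\frac{1}{2}(1-\alpha/\gamma)^+ +\varepsilon}\|f\|_{L^2},
\]
for $\hat{f}$ supported on $|\xi|\sim\lambda$ (interpreted as the bound $\lambda^\varepsilon\|f\|_{L^2}$ when $\gamma\leq\alpha$). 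Linearizing the supremum by a measurable selector $x\mapsto t(x)\in[0,1]$ and invoking the $TT^*$ method, the task becomes bounding the kernel
\[
K(x,y,t(x),t(y)) = \int e^{i[(\Gamma(x,t(x))-\Gamma(y,t(y)))\xi+(t(x)-t(y))|\xi|]}\,e^{-(t(x)^\gamma+t(y)^\gamma)|\xi|}\,\Psi(\xi/\lambda)\,d\xi.
\]

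After rescaling $\xi\to\lambda\xi$ and splitting into positive and negative frequencies, the phase becomes affine in $\xi$, so van der Corput's second-derivative lemma yields no gain and all decay must come from two sources. First, when $|x-y|\gg|t(x)-t(y)|^\alpha$ the bilipschitz hypothesis ensures that the first derivative of the phase in $\xi$ is $\gtrsim\lambda|x-y|$, and non-stationary phase gives arbitrary polynomial decay. Second, the damping factor $e^{-\lambda(t(x)^\gamma+t(y)^\gamma)}$ combined with $t(x)^\gamma+t(y)^\gamma\gtrsim|t(x)-t(y)|^\gamma$ and $e^{-y}\lesssim_\beta y^{-\beta}$ contributes a factor $(\lambda|t(x)-t(y)|^\gamma)^{-\beta}$ for any $\beta\geq 0$. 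Combined with the trivial bound $|K|\lesssim\lambda$, in the complementary region $|x-y|\lesssim|t(x)-t(y)|^\alpha$ this produces
\[
|K(x,y,t(x),t(y))| \lesssim_\beta \lambda^{1-\beta}|x-y|^{-\gamma\beta/\alpha}.
\]
Schur's test is then applied by splitting the $y$-integral at $|x-y|\sim\lambda^{-\alpha/\gamma}$: on the short range one uses $|K|\lesssim\lambda$, contributing $\lambda^{1-\alpha/\gamma}$, while on the long range the optimal choice $\beta\approx\alpha/\gamma$ produces at most a logarithmic factor absorbed into $\lambda^\varepsilon$. Symmetry handles the dual integral, and Schur's test delivers the maximal bound with the claimed exponent $\tfrac{1}{2}(1-\alpha/\gamma)^+$.

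For necessity, I would transcribe the construction of Theorem \ref{theorem3.1}. Taking $\Gamma(x,t)=x-t^\alpha$, $\widehat{f_R}(\eta)=R^{-1}g(\eta/R)$ with $g$ a non-negative bump supported on $[0,1/2]$, and $t_x=x^{1/\alpha}$, one verifies that when $\gamma>\alpha$ the set $\{0<x<cR^{-\alpha/\gamma}\}$ keeps the rescaled phase $(x-t^\alpha)R\xi+tR|\xi|$ and the damping $e^{-t^\gamma R|\xi|}$ of order one, whence $\sup_t|P^1_\gamma f_R|\gtrsim 1$ there. Comparing with $\|f_R\|_{H^s}\lesssim R^{s-1/2}$ and letting $R\to\infty$ forces $s\geq\tfrac{1}{2}(1-\alpha/\gamma)$; the trivial endpoint $s\geq 0$ follows by taking $t=0$ and $0<x<c/R$. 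The main obstacle is the absence of curvature in the $m=1$ phase: unlike the $m=2$ arguments in Section \ref{chap2}, one cannot harvest the $|t(x)-t(y)|^{-1/2}$ gain from van der Corput's second-derivative bound, so every power of $\lambda$ saved in the Schur integral must be purchased from the exponential damping alone, and the balance between the parameter $\beta$ and the split point $\lambda^{-\alpha/\gamma}$ must be chosen precisely to recover the sharp exponent with only an $\varepsilon$-loss.
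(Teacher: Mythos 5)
The paper itself omits the proof of Theorem \ref{theorem4.3}, saying only that it follows the methods of Sections \ref{chap2} and \ref{chap3}, so your proposal must stand on its own. Your sufficiency argument is essentially right and is what a transcription of Section \ref{chap2} to $m=1$ should look like: because the phase $\lambda[(\Gamma(x,t(x))-\Gamma(y,t(y)))\xi+(t(x)-t(y))|\xi|]$ is affine on the support of $\Psi$, there is no second-derivative gain, the region $|x-y|\gg|t(x)-t(y)|^\alpha$ is handled entirely by non-stationary phase (using that $|t(x)-t(y)|\leq|t(x)-t(y)|^\alpha\ll|x-y|$), and in the complementary region the only source of decay is the damping, which gives $|K|\lesssim\lambda^{1-\beta}|x-y|^{-\gamma\beta/\alpha}$. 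The Schur integral split at $|x-y|\sim\lambda^{-\alpha/\gamma}$ then gives $I(x)\lesssim\lambda^{(1-\alpha/\gamma)^+}$ and hence the claimed exponent $\tfrac12(1-\alpha/\gamma)^+$. All of this checks out.

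Your necessity argument, however, has a genuine gap for $\gamma>1$. You transcribe Theorem \ref{theorem3.1} verbatim, choosing $t_x=x^{1/\alpha}$ so that $x-t_x^\alpha=0$. For $m=2$ this is the right idea: the remaining term $t_xR^2\xi^2$ in the phase is quadratic in $\xi$ and separate from the $\Gamma$ term, and the binding constraint becomes $t_xR^2\lesssim 1$ versus $t_x^\gamma R^2\lesssim 1$. For $m=1$ this is no longer true: on the support of $g$ the term $t|\eta|$ rescales to $t_xR\xi$, which is \emph{linear in $\xi$ and adds to} $(x-t_x^\alpha)R\xi$. With your choice $t_x=x^{1/\alpha}$ the rescaled phase equals $t_xR\xi$, and on your claimed set $\{0<x<cR^{-\alpha/\gamma}\}$ this is of size $R^{1-1/\gamma}$, which is \emph{not} $O(1)$ when $\gamma>1$. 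If you instead enforce $t_xR\lesssim1$, you only capture $|A|\gtrsim R^{-\alpha}$ and obtain $s\geq\tfrac12(1-\alpha)$, which is strictly weaker than the claimed sharp threshold $\tfrac12(1-\alpha/\gamma)$ when $\gamma>1$ and $\alpha<1$. The fix is precisely the observation you made but did not exploit on the necessity side: since for $m=1$ the two linear-in-$\xi$ pieces coalesce, $t_x$ should cancel the \emph{whole} phase, i.e.\ one should solve $x-t_x^\alpha+t_x=0$ (so $x=t_x^\alpha-t_x$, well-defined and $\approx t_x^\alpha$ for small $t_x$). Then the only remaining constraint is from the damping, $t_x^\gamma R\lesssim1$, which yields $t_x\lesssim R^{-1/\gamma}$ and hence $|A|\gtrsim R^{-\alpha/\gamma}$ and $s\geq\tfrac12(1-\alpha/\gamma)$ for \emph{all} $\gamma$. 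Equivalently, one can view this as the $b=1$ instance of the shifted construction of Theorem \ref{theorem3.2}; for $m=1$ the shift degenerates and the unshifted example, properly tuned, already suffices, whereas for $m=2$ the two constructions of Theorems \ref{theorem3.1} and \ref{theorem3.2} are genuinely different. As written, your necessity only establishes sharpness for $\gamma\in(\alpha,1]$.
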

When $m\in(1,\infty)$, we have the following  theorems.
\begin{theorem}\label{theorem4.4}
	Let $m\in(1,\infty)$ and  $\gamma\in(0,\infty)$. For any curve  $\Gamma(x,t) $ which is bilipschitz continuous in \( x \) and   $\alpha$-Hölder continuous in \( t \) with $ 0 < \alpha \leq \frac{1}{2m} $, we have convergence result 
	(\ref{pointwise convergence mcass}) whenever  $ s>s(\gamma)=\min \left\{\frac{1}{2 }(1-m\alpha), \frac{1}{2 }\left(1-\frac{m\alpha}{\gamma} \right)^{+} \right\} $. Conversely, there exists a curve satisfying the previous assumptions, while (\ref{pointwise convergence mcass}) fails whenever $s<s(\gamma)$. 
\end{theorem}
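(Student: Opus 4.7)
The plan is to mirror the strategy of Theorems \ref{theorem1}--\ref{theorem3}, with the observation that the hypothesis $\alpha\le\tfrac{1}{2m}$ (equivalently $m\alpha\le\tfrac{1}{2}$) plays the same role here as $\alpha\le\tfrac{1}{4}$ does in Theorem \ref{theorem2}, the fractional exponent $m$ merely replacing the quadratic phase $|\xi|^{2}$ by $|\xi|^{m}$. By the standard smooth approximation and Littlewood--Paley decomposition, the sufficiency reduces to the frequency-localized maximal estimate
\begin{equation*}
\bigl\|\sup_{t\in[0,1]}|P^{m}_{\gamma}f(\Gamma(x,t),t)|\bigr\|_{L^{2}([-1,1])}\lesssim_{\varepsilon}\lambda^{s(\gamma)+\varepsilon}\|f\|_{L^{2}},\qquad\operatorname{supp}\widehat{f}\subset\{|\xi|\sim\lambda\},
\end{equation*}
whose target exponent $s(\gamma)$ is $0$, $\tfrac{1}{2}-\tfrac{m\alpha}{\gamma}$, or $\tfrac{1}{2}-m\alpha$ in the three regimes $\gamma\in(0,m\alpha)$, $\gamma\in[m\alpha,1)$, and $\gamma\in[1,\infty)$ respectively.

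For the sufficiency, I would linearize the maximal operator by a measurable $t=t(x)$ and apply $TT^{*}$, so the task becomes bounding
\begin{equation*}
K(x,y)=\int e^{i((\Gamma(x,t(x))-\Gamma(y,t(y)))\xi+(t(x)-t(y))|\xi|^{m})}e^{-(t(x)^{\gamma}+t(y)^{\gamma})|\xi|^{m}}\Psi(\xi/\lambda)\,d\xi.
\end{equation*}
After the rescaling $\xi\mapsto\lambda\xi$, the phase satisfies $|\varphi''(\xi)|\gtrsim\lambda^{m}|t(x)-t(y)|$ on $|\xi|\sim 1$ (this is where $m>1$ enters), and the Gaussian amplitude still obeys $\sup|G|+\int|G'|\lesssim_{\beta}(\lambda^{m}|t(x)-t(y)|^{\gamma})^{-\beta}$ for every $\beta\ge 0$, verbatim as in (\ref{supG}) with $\lambda^{2}$ replaced by $\lambda^{m}$. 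I would split into the tangential case $|x-y|\lesssim|t(x)-t(y)|^{\alpha}$ and the transversal case $|x-y|\gg|t(x)-t(y)|^{\alpha}$, and in the latter further split at the non-stationarity threshold $|x-y|\sim\lambda^{m-1}|t(x)-t(y)|$ (non-stationary phase above, van der Corput below). The argument of Section \ref{chap2} then yields for every $\beta_{1},\beta_{2}\ge 0$
\begin{equation*}
|K(x,y)|\lesssim\max\!\Bigl\{\min\!\Bigl\{\tfrac{\lambda^{1-m/2-m\beta_{1}}}{|x-y|^{1/(2\alpha)+\gamma\beta_{1}/\alpha}},\,\tfrac{\lambda^{1-m\beta_{1}}}{|x-y|^{\gamma\beta_{1}/\alpha}}\Bigr\},\,\tfrac{\lambda^{1/2-m\beta_{2}+(m-1)\gamma\beta_{2}}}{|x-y|^{1/2+\gamma\beta_{2}}}\Bigr\}.
\end{equation*}
Integrating over $y\in[-1,1]$, the choice $(\beta_{1},\beta_{2})=(\alpha/\gamma,\,1/(2\gamma))$ recovers $I(x)\lesssim 1$ for $\gamma\in(0,m\alpha)$ and $I(x)\lesssim_{\varepsilon}\lambda^{1-m\alpha/\gamma+\varepsilon}$ for $\gamma\in[m\alpha,1)$, while $(\beta_{1},\beta_{2})=(0,0)$ yields $I(x)\lesssim\lambda^{1-m\alpha}$ for $\gamma\in[1,\infty)$. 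Here the hypothesis $m\alpha\le\tfrac{1}{2}$ is precisely what guarantees that the transversal contribution $\lambda^{1/2}$ is dominated by the tangential contribution $\lambda^{1-m\alpha}$. Schur's test then closes each case.

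For the necessity I would reuse the construction of Theorem \ref{theorem3.1} with $\Gamma(x,t)=x-t^{\alpha}$ and $\widehat{f}_{R}(\eta)=R^{-1}g(\eta/R)$, so that after scaling $\eta=R\xi$ the phases become $\phi_{R}(\xi,x,t)=(x-t^{\alpha})R\xi+tR^{m}|\xi|^{m}$ and $\Phi_{R}(\xi,t)=-t^{\gamma}R^{m}|\xi|^{m}$. Choosing $t_{x}=x^{1/\alpha}$, both phases remain $O(1)$ on $\{0<x<cR^{-m\alpha/\gamma}\}$ when $\gamma<1$ (where $t^{\gamma}R^{m}\ge tR^{m}$) and on $\{0<x<cR^{-m\alpha}\}$ when $\gamma\ge 1$, so the pointwise lower bound (\ref{3.4}) survives; letting $R\to\infty$ in (\ref{3.1}) forces $s\ge\tfrac{1}{2}-\tfrac{m\alpha}{\gamma}$ and $s\ge\tfrac{1}{2}-m\alpha$ respectively, while the trivial choice $t=0$, $x<c/R$ gives $s\ge 0$. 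The main obstacle I expect is the exponent bookkeeping in the transversal bound: inserting $\lambda^{m}$ in the van der Corput denominator and $\lambda^{m-1}$ in the non-stationarity threshold introduces extra $(m-2)/2$ and $(m-1)\gamma\beta_{2}$ shifts, and one must verify carefully that the combination of parameters $(\beta_{1},\beta_{2})$ listed above precisely matches $2s(\gamma)$ in every regime while staying compatible with the subsidiary constraint $\alpha\le\tfrac{1}{2m}$.
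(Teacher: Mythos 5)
Your proposal follows precisely the route the paper indicates: it omits the proof of Theorem~\ref{theorem4.4} with the remark that it is the same as for Theorems~\ref{theorem1}--\ref{theorem3}, and your write-up is the correct adaptation of the Proposition~\ref{proposition2} argument with $\lambda^{2}\mapsto\lambda^{m}$ in the van der Corput and Gaussian bounds and the nonstationarity threshold moved to $|x-y|\sim\lambda^{m-1}|t(x)-t(y)|$. The kernel estimate you display, the parameter choices $(\beta_{1},\beta_{2})=(\alpha/\gamma,1/(2\gamma))$ for $\gamma<1$ and $(0,0)$ for $\gamma\ge1$, the resulting Schur bounds $I(x)\lesssim 1$, $\lambda^{1-m\alpha/\gamma+\varepsilon}$, $\lambda^{1-m\alpha}$, and the observation that $m\alpha\le\tfrac12$ is exactly what keeps the transversal $\lambda^{1/2}$ below the tangential $\lambda^{1-m\alpha}$, all check out.

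One arithmetic slip in the necessity, though: the set $A=\{0<x<cR^{-m\alpha/\gamma}\}$ has measure $\sim R^{-m\alpha/\gamma}$, so the $L^{2}$ lower bound on the maximal function is $\sim R^{-m\alpha/(2\gamma)}$; comparing with $\|f_{R}\|_{H^{s}}\lesssim R^{s-1/2}$ yields $s\ge\tfrac12-\tfrac{m\alpha}{2\gamma}$, not $\tfrac12-\tfrac{m\alpha}{\gamma}$. Similarly $B=\{0<x<cR^{-m\alpha}\}$ gives $s\ge\tfrac12-\tfrac{m\alpha}{2}$, not $\tfrac12-m\alpha$. These corrected values are exactly the thresholds $\tfrac12(1-\tfrac{m\alpha}{\gamma})$ and $\tfrac12(1-m\alpha)$ of the theorem, so your construction is sound; only the exponents as written in the final paragraph dropped a factor of $\tfrac12$. (Your anticipated ``main obstacle'' in the transversal bookkeeping is actually fine; the slip is in this simpler spot.)
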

\begin{theorem}\label{theorem4.5}
Let $m\in(1,\infty)$ and  $\gamma\in(0,\infty)$. For any curve  $\Gamma(x,t) $ which is bilipschitz continuous in \( x \) and   $\alpha$-Hölder continuous in \( t \) with $ \frac{1}{2m} < \alpha < \min\{\frac{1}{2},\frac{1}{m} \}$, we have convergence result 
(\ref{pointwise convergence mcass}) whenever  $ s>s(\gamma) $. Conversely, there exists a curve satisfying the previous assumptions, while (\ref{pointwise convergence mcass}) fails whenever $s<s(\gamma)$. Here
	\begin{equation}
		s(\gamma)=\begin{cases}
			0             \quad  & \gamma \in(0,m\alpha), \\ 
			\frac{1}{2 }-\frac{m\alpha} {2\gamma}  \quad   & \gamma \in[m\alpha,1),  \\
			\frac{1-m\alpha}{2}       \quad   & \gamma \in[1,\frac {m} {m-2+2m\alpha}),  \\
			\frac{m}{4}(1-\frac{1} {\gamma})   \quad   & \gamma \in[\frac {m} {m-2+2m\alpha},\frac {m}{m-1}),  \\
			\frac{1}{4}  \quad   & \gamma \in[\frac {m}{m-1},\infty). \
		\end{cases}
	\end{equation}
\end{theorem}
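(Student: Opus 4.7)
The plan is to mirror the proof architecture of Theorem \ref{theorem3}, systematically replacing $|\xi|^2$ by $|\xi|^m$ and tracking how exponents shift. For sufficiency, a smooth approximation plus Littlewood--Paley decomposition reduces the a.e.\ convergence to the frequency-localized maximal estimate
\[
\Bigl\|\sup_{t\in[0,1]}\bigl|P^m_\gamma f(\Gamma(x,t),t)\bigr|\Bigr\|_{L^2([-1,1])}\lesssim_\varepsilon \lambda^{s(\gamma)+\varepsilon}\|f\|_{L^2}
\]
for $f$ with $\mathrm{supp}\,\hat f\subset\{|\xi|\sim\lambda\}$. Linearizing the supremum by a measurable $x\mapsto t(x)$ and applying $TT^{*}$, it suffices to control
\[
K(x,y,t(x),t(y))=\int e^{i((\Gamma(x,t(x))-\Gamma(y,t(y)))\xi+(t(x)-t(y))|\xi|^m)}e^{-(t(x)^\gamma+t(y)^\gamma)|\xi|^m}\Psi(\xi/\lambda)\,d\xi
\]
in the sense that $\sup_x\int|K|\,dy\lesssim_\varepsilon\lambda^{2s(\gamma)+\varepsilon}$, after which Schur's test closes the argument.

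Rescaling $\xi\mapsto\lambda\xi$, the phase $\varphi(\xi)=\lambda(\Gamma(x,t(x))-\Gamma(y,t(y)))\xi+\lambda^{m}(t(x)-t(y))|\xi|^m$ satisfies $|\varphi''|\gtrsim\lambda^{m}|t(x)-t(y)|$ on $|\xi|\sim 1$ (using $m>1$), and the damping $G(\xi)=e^{-\lambda^{m}(t(x)^\gamma+t(y)^\gamma)|\xi|^m}\Psi(\xi)$ obeys $\sup|G|+\int|G'|\lesssim_\beta(\lambda^{m}|t(x)-t(y)|^\gamma)^{-\beta}$ for every $\beta\geq 0$, exactly as in \eqref{supG}. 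I would then split the analysis as in the proof of Proposition \ref{proposition3}: in the tangential region $|x-y|\lesssim|t(x)-t(y)|^\alpha$, van der Corput with parameter $\beta_1$ produces a bound featuring $|x-y|^{-\gamma\beta_1/\alpha-1/(2\alpha)}$ and $\lambda^{1-m\beta_1}$; in the non-tangential region $|x-y|\gg|t(x)-t(y)|^\alpha$, one further separates $|x-y|\gg\lambda^{m-1}|t(x)-t(y)|$ (nonstationary phase, rapid decay) from the complementary range, where van der Corput with parameter $\beta_2$ yields $|x-y|^{-1/2-\gamma\beta_2}$ with an $m$-dependent $\lambda$-exponent. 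Combining these gives the fractional analogue of \eqref{estimate}, and the proof is then closed by choosing $(\beta_1,\beta_2)$ optimally in each of the five $\gamma$-intervals: $(\alpha/\gamma,1/(m\gamma))$ for $\gamma\in(0,1)$, $(0,1/(m\gamma))$ for $\gamma\in[1,m/(m-1))$, and $(0,0)$ for $\gamma\in[m/(m-1),\infty)$. The breakpoints $m\alpha$ and $m/(m-2+2m\alpha)$ arise from where the tangential and non-tangential contributions exchange dominance; specializing the entire scheme to $m=2$ recovers Proposition \ref{proposition3}, which is a useful consistency check.

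For necessity, I would adapt the constructions of Theorems \ref{theorem3.1} and \ref{theorem3.2} to $\Gamma(x,t)=x-t^\alpha$. The data $\widehat{f}_R(\eta)=R^{-1}g(\eta/R)$ with $t_x=x^{1/\alpha}$ handles the exponents $s\geq 0$, $s\geq \tfrac{1}{2}-\tfrac{m\alpha}{2\gamma}$, and $s\geq \tfrac{1-m\alpha}{2}$, exactly as in Theorem \ref{theorem3.1} after replacing $\xi^{2}$ by $\xi^{m}$ in the phase. The shifted data $\widehat{f}_R(\eta)=R^{-1}g((\eta+R^{b})/R)$, now with $b$ taking the roles of the $b=\gamma,2$ choices of Theorem \ref{theorem3.2}, together with $t_x$ solving $x-t_x^\alpha-mR^{b(m-1)}t_x=0$ (the $|\xi|^m$-analogue of the equation $x-t_x^\alpha-2R^b t_x=0$), forces $s\geq\tfrac{m}{4}(1-1/\gamma)$ on $[m/(m-2+2m\alpha),m/(m-1))$ and $s\geq 1/4$ on $[m/(m-1),\infty)$; the measure of the resulting set $A$ of admissible $x$'s combined with $\|f_R\|_{H^s}\lesssim R^{bs-1/2}$ yields each exponent in turn.

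I expect the main obstacle to be purely combinatorial bookkeeping rather than any new analytic difficulty. Under the standing assumption $\tfrac{1}{2m}<\alpha<\min\{\tfrac{1}{2},\tfrac{1}{m}\}$ the four breakpoints $m\alpha$, $1$, $\tfrac{m}{m-2+2m\alpha}$, $\tfrac{m}{m-1}$ are distinct and appear in the stated order: the condition $\alpha<\tfrac{1}{m}$ is exactly what ensures $m\alpha<1$ and that the transition to the flat $\tfrac{1}{4}$-regime happens at $\gamma=m/(m-1)>1$, while $\alpha>\tfrac{1}{2m}$ is exactly what ensures $\tfrac{m}{m-2+2m\alpha}<\tfrac{m}{m-1}$, separating the two non-trivial intermediate regimes. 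Verifying these inequalities and then checking that each piece of $s(\gamma)$ matches the corresponding row of the optimization table (and the corresponding counterexample) will require care but no new ideas beyond what appears in Sections \ref{chap2} and \ref{chap3}.
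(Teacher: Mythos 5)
The paper gives no proof of Theorem~\ref{theorem4.5} (it only states that the fractional results follow by the same method as Theorems~\ref{theorem1}--\ref{theorem3}), so your reconstruction is the right kind of thing to attempt, and its architecture --- linearize, $TT^{*}$, van der Corput plus the damping estimate \eqref{supG}, Schur test, then shifted/unshifted bump functions with $t_x$ chosen to kill the linear phase --- is correct. Your bookkeeping of the breakpoints $m\alpha$, $1$, $\tfrac{m}{m-2+2m\alpha}$, $\tfrac{m}{m-1}$ and the check that they are ordered under $\tfrac{1}{2m}<\alpha<\min\{\tfrac12,\tfrac1m\}$ are also right.

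However, there are two concrete errors, both coming from over-literal specialization of the $m=2$ formulas. First, the non-tangential van der Corput bound is
\[
|K|\lesssim \lambda^{\frac12-m\beta_2+(m-1)\gamma\beta_2}\,|x-y|^{-\frac12-\gamma\beta_2},
\]
and the constraint governing $\beta_2$ is integrability of $|x-y|^{-1/2-\gamma\beta_2}$ over $[-1,1]$, which forces $\beta_2\le\tfrac{1}{2\gamma}$ and is \emph{independent of $m$}. The optimal choice is therefore $\beta_2=\tfrac{1}{2\gamma}$ (giving the exponent $\tfrac{m}{2}(1-\tfrac1\gamma)$), not $\tfrac{1}{m\gamma}$; your $\tfrac{1}{m\gamma}$ agrees at $m=2$, but for $1<m<2$ the resulting integral $\int |x-y|^{-1/2-1/m}\,dy$ actually diverges, and for $m>2$ it yields a strictly worse power of $\lambda$.

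Second, in the necessity argument the choice ``$b$ taking the role of $b=\gamma$'' does not give the exponent $\tfrac{m}{4}(1-\tfrac1\gamma)$. With $b=\gamma$ one finds $|A|\sim R^{\gamma-2}$ and hence only $s\ge\tfrac12(1-\tfrac1\gamma)$, which is too weak for $m>2$. Requiring the two admissibility constraints $t_x\lesssim R^{-b(m-2)-2}$ (from the $\xi^{2}$-coefficient in the Taylor expansion of $|R\xi-R^{b}|^{m}$) and $t_x\lesssim R^{-bm/\gamma}$ (from the Gaussian damping) to balance forces
\[
b=\frac{2\gamma}{m-(m-2)\gamma},
\]
and with this $b$ one gets $|A|\sim R^{b(m-1-m/\gamma)}$ on the relevant range and, after combining with $\|f_R\|_{H^{s}}\lesssim R^{bs-1/2}$, exactly $s\ge\tfrac{m}{4}(1-\tfrac1\gamma)$ for $\gamma\in[\tfrac{m}{m-2+2m\alpha},\tfrac{m}{m-1})$. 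The value $b=2$ for $\gamma\ge\tfrac{m}{m-1}$ does generalize verbatim and gives $s\ge\tfrac14$, as you say. Both slips are local and fixable, but as written the sufficiency bound fails for $1<m<2$ and the necessity bound falls short of the claimed $s(\gamma)$ for $m\ne 2$.
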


\begin{theorem}\label{theorem4.7}
	Let $m\in(1,\infty)$ and  $\gamma\in(0,\infty)$. For any curve  $\Gamma(x,t) $ which is bilipschitz continuous in \( x \) and   $\alpha$-Hölder continuous in \( t \) with $ \frac{1}{m} \leq \alpha \leq 1$, we have convergence result 
	(\ref{pointwise convergence mcass}) whenever  $ s>s(\gamma)=\min\{\frac{1}{4},\frac{m}{4}(1-\frac{1} {\gamma})^{+}\} $. Conversely, there exists a curve satisfying the previous assumptions, while (\ref{pointwise convergence mcass}) fails whenever $s<s(\gamma)$. 
\end{theorem}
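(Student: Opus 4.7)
The plan is to follow the template of Sections~\ref{chap2} and \ref{chap3}, substituting $|\xi|^{m}$ for $|\xi|^{2}$ throughout. For the sufficiency direction, a standard smooth approximation plus Littlewood--Paley decomposition reduces Theorem~\ref{theorem4.7} to the frequency-localized maximal estimate
\[
\Bigl\|\sup_{t\in[0,1]}\bigl|P^{m}_{\gamma} f(\Gamma(x,t),t)\bigr|\Bigr\|_{L^{2}([-1,1])}\lesssim_{\varepsilon}\lambda^{s(\gamma)+\varepsilon}\|f\|_{L^{2}},
\]
for $f$ with $\mathrm{supp}\,\hat f\subset\{|\xi|\sim\lambda\}$. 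I then linearize the supremum via a measurable map $x\mapsto t(x)\in[0,1]$ and apply the $TT^{*}$ trick, reducing matters to bounds on the kernel
\[
K(x,y)=\int e^{i[(\Gamma(x,t(x))-\Gamma(y,t(y)))\xi+(t(x)-t(y))|\xi|^{m}]}e^{-(t(x)^{\gamma}+t(y)^{\gamma})|\xi|^{m}}\Psi(\xi/\lambda)\,d\xi,
\]
followed by Schur's test.

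After the rescaling $\xi\to\lambda\xi$, the phase $\varphi(\xi)=\lambda\,\Delta\Gamma\cdot\xi+\lambda^{m}\Delta t\cdot|\xi|^{m}$ has $|\varphi''|\sim\lambda^{m}|\Delta t|$ on $|\xi|\sim 1$, while the Gaussian amplitude satisfies $\sup_{|\xi|\sim 1}|G|+\int_{|\xi|\sim 1}|G'(\xi)|\,d\xi\lesssim_{\beta}(\lambda^{m}|\Delta t|^{\gamma})^{-\beta}$ for every $\beta\ge 0$, by virtue of $t(x)^{\gamma}+t(y)^{\gamma}\gtrsim|\Delta t|^{\gamma}$ and $e^{-y}\lesssim_{\beta}y^{-\beta}$. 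Splitting into the regimes $|x-y|\lesssim|\Delta t|^{\alpha}$ (van der Corput applies directly) and $|x-y|\gg|\Delta t|^{\alpha}$ (where one further isolates the non-stationary subregime $|x-y|\gg\lambda^{m-1}|\Delta t|$ by integration by parts) and converting $|\Delta t|$ into $|x-y|$ via $|\Delta t|\gtrsim|x-y|^{1/\alpha}$ or $|\Delta t|\gtrsim|x-y|/\lambda^{m-1}$, yields a two-parameter kernel estimate of the form
\[
|K(x,y)|\lesssim_{\beta_{1},\beta_{2}}\max\!\left\{\min\!\left\{\tfrac{\lambda^{1-m/2-m\beta_{1}}}{|x-y|^{1/(2\alpha)+\gamma\beta_{1}/\alpha}},\tfrac{\lambda^{1-m\beta_{1}}}{|x-y|^{\gamma\beta_{1}/\alpha}}\right\},\tfrac{\lambda^{1/2-m\beta_{2}+(m-1)\gamma\beta_{2}}}{|x-y|^{1/2+\gamma\beta_{2}}}\right\}.
\]
Integrating in $y$ and optimizing the parameters --- taking $(\beta_{1},\beta_{2})=(0,1/(2\gamma))$ for $\gamma\in[1,m/(m-1))$ (with a slight perturbation $\beta_{2}=1/(2\gamma)-\delta$ for $\gamma\in(0,1)$ to kill the logarithm) and $(\beta_{1},\beta_{2})=(0,0)$ for $\gamma\in[m/(m-1),\infty)$ --- gives $I(x)=\int|K|\,dy$ respectively of order $1$, $\lambda^{(m/2)(1-1/\gamma)+\varepsilon}$, and $\lambda^{1/2}$; the hypothesis $\alpha\ge 1/m$ is precisely what makes the first factor in the kernel contribute no more than $O(1)$ after integration, so the last factor dictates the final bound. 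By symmetry, Schur's test delivers the required operator norm $\lambda^{s(\gamma)+\varepsilon}$ after a square root.

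For necessity, I would mimic Theorem~\ref{theorem3.2} with $\Gamma(x,t)=x-t^{\alpha}$ and the modulated bump $\hat f_{R}(\eta)=R^{-1}g((\eta+R^{b})/R)$, which has $\|f_{R}\|_{H^{s}}\lesssim R^{bs-1/2}$. After the shift $\eta=R\xi-R^{b}$ (with $b>1$), the binomial expansion
\[
|R\xi-R^{b}|^{m}=R^{bm}-mR^{b(m-1)+1}\xi+\tbinom{m}{2}R^{b(m-2)+2}\xi^{2}+O(R^{b(m-3)+3})
\]
reveals the stationary time $t_{x}$ determined by $x-t_{x}^{\alpha}-mR^{b(m-1)}t_{x}=0$. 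Requiring the quadratic remainder $t_{x}R^{b(m-2)+2}$ and the Gaussian damping $t_{x}^{\gamma}R^{bm}$ both to stay bounded forces a natural balance; taking $b=2\gamma/(\gamma(2-m)+m)$ for $\gamma\in[1,m/(m-1))$ and $b=m/(m-1)$ for $\gamma\in[m/(m-1),\infty)$ equalizes these two constraints and yields counterexample sets of sizes $\gtrsim R^{b-2}$ and $\gtrsim 1$ respectively. Comparing these measures against $\|f_{R}\|_{H^{s}}$ as $R\to\infty$ forces $s\ge(m/4)(1-1/\gamma)$ and $s\ge 1/4$ in the two regimes. The main obstacle is that the two constraints exchange dominance precisely at $\gamma=m/(m-1)$, so care is needed to verify that the chosen $b$ really minimizes the exponent demanded by the maximal estimate on both sides of this threshold; once this balance is located, the remaining lower-bound argument follows the proof of Theorem~\ref{theorem3.2} verbatim.
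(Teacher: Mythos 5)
Because the paper omits the proof of Theorem~\ref{theorem4.7} (it declares the fractional case ``similar'' to Theorems~\ref{theorem1}--\ref{theorem3} and gives no details), I can only compare your blueprint against the method of Sections~\ref{chap2}--\ref{chap3}. The overall architecture is exactly right --- Littlewood--Paley reduction, linearization plus $TT^{*}$, a two--regime kernel estimate with a free exponent $\beta$ in the Gaussian damping, Schur's test, and modulated bumps $\hat f_R(\eta)=R^{-1}g((\eta+R^{b})/R)$ for necessity --- and your kernel estimate with exponents $\lambda^{1-m/2-m\beta_{1}}$, $\lambda^{1-m\beta_{1}}$, $\lambda^{1/2-m\beta_{2}+(m-1)\gamma\beta_{2}}$ is the correct generalization of \eqref{estimate}. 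However, two of the specific parameter choices are wrong off the single value $m=2$, and both slip through precisely because the paper's Theorems~\ref{theorem1}--\ref{theorem3} only see $m=2$.

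\textbf{Sufficiency, $\beta_{1}=0$ fails for $1<m<2$.} When $1<m<2$ the hypothesis $\alpha\ge 1/m$ forces $\alpha>1/2$, and then $1/(2\alpha)<1$, so the ``far'' portion of the first term in $I(x)$ is
\[
\lambda^{1-m/2-m\beta_{1}}\int_{\lambda^{-m\alpha}}^{1}|x-y|^{-1/(2\alpha)-\gamma\beta_{1}/\alpha}\,dy\ \sim\ \lambda^{1-m/2-m\beta_{1}},
\]
since the integral converges at $|x-y|\sim 1$. With $\beta_{1}=0$ this is $\lambda^{1-m/2}$, which is $\gg 1$ for $m<2$ and strictly exceeds the target $\lambda^{(m/2)(1-1/\gamma)}$ whenever $1\le\gamma<\tfrac{m}{2(m-1)}$ (and exceeds $\lambda^{0}$ for all $\gamma<1$). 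Your sentence that ``$\alpha\ge 1/m$ is precisely what makes the first factor contribute no more than $O(1)$'' is only true for $m\ge 2$, where $\lambda^{1-m/2}\le 1$. The fix is to take $\beta_{1}=\bigl(\tfrac{1}{m}-1+\tfrac{1}{2\gamma}\bigr)^{+}$ (which is positive exactly when $\gamma<\tfrac{m}{2(m-1)}$): then the far piece becomes $\lambda^{1-m/2-m\beta_{1}}\le\lambda^{(m/2)(1-1/\gamma)}$, the near piece becomes $\lambda^{1-m\alpha+m\beta_{1}(\gamma-1)}$ which one checks is still $\le\lambda^{(m/2)(1-1/\gamma)}$ using $\alpha\ge 1/m$, and the admissibility constraint $1/(2\alpha)+\gamma\beta_{1}/\alpha\le 1$ is equivalent to $\gamma\ge m(1-\alpha)/(m-1)$, which holds on the whole range $\gamma\ge 1$, $\alpha\ge 1/m$ (with a harmless log at the endpoint).

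\textbf{Necessity, $b=m/(m-1)$ fails for $m\ne 2$.} For the regime $\gamma\ge m/(m-1)$ you need an $x$-set of measure $\gtrsim 1$, and with bump parameter $b$ the binding constraints on $t_{x}$ are the quadratic remainder $t_{x}R^{b(m-2)+2}\lesssim 1$, the Gaussian damping $t_{x}^{\gamma}R^{bm}\lesssim 1$, and also $x=t_{x}^{\alpha}+mR^{b(m-1)}t_{x}\lesssim 1$ (i.e.\ $t_{x}\lesssim R^{-b(m-1)}$). Balancing the first and third requires $b(m-2)+2=b(m-1)$, i.e.\ $b=2$; the Gaussian constraint is then the weaker $t_x\lesssim R^{-2m/\gamma}$ exactly when $\gamma\ge m/(m-1)$, and the resulting $x$-set has measure $\sim R^{-(2m-2)}\cdot R^{2(m-1)}\sim 1$, giving $s\ge 1/4$. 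Your choice $b=m/(m-1)$ agrees with $b=2$ only at $m=2$. For $m>2$ it gives $b<2$, so the quadratic constraint bites first, the set has measure $\sim R^{b-2}=R^{(2-m)/(m-1)}$, and one only deduces $s\ge\tfrac{1}{2m}<\tfrac{1}{4}$; for $1<m<2$ it gives $b>2$, the set still has measure $\sim 1$ but from $\|f_{R}\|_{H^{s}}\sim R^{bs-1/2}$ one only gets $s\ge\tfrac{1}{2b}=\tfrac{m-1}{2m}<\tfrac{1}{4}$. In both cases the counterexample is too weak. Note also that your formula $b=2\gamma/(\gamma(2-m)+m)$ already evaluates to $2$ (not $m/(m-1)$) at the threshold $\gamma=m/(m-1)$, so the two choices you gave do not even agree there; $b=2$ is the correct continuation.

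Both defects are repairable inside your framework, but as written the proposal establishes Theorem~\ref{theorem4.7} only for $m=2$, which is already Theorem~\ref{theorem1}.
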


\begin{theorem}\label{theorem4.6}
	Let $m\in(1,2)$ and $\gamma\in(0,\infty)$. For any curve  $\Gamma(x,t) $ which is bilipschitz continuous in \( x \) and   $\alpha$-Hölder continuous in \( t \) with $ \frac{1}{2} \leq \alpha <\frac{1}{m} $, we have convergence result 
	(\ref{pointwise convergence mcass}) whenever  $ s>s(\gamma) $. Conversely, there exists a curve satisfying the previous assumptions, while (\ref{pointwise convergence mcass}) fails whenever $s<s(\gamma)$. Here
	\begin{equation}
		s(\gamma)=\begin{cases}
			0             \quad  & \gamma \in(0,m\alpha), \\ 
			\frac{1}{2 }-\frac{m\alpha} {2\gamma}  \quad   & \gamma \in[m\alpha,1),  \\
			\frac{2-m}{4}+\frac{m(1-2\alpha)}{4\gamma}       \quad   & \gamma \in[1,\frac {m(1-\alpha)} {m-1}),  \\
			\frac{m}{4}(1-\frac{1} {\gamma})   \quad   & \gamma \in[\frac {m(1-\alpha)} {m-1},\frac {m}{m-1}),  \\
			\frac{1}{4}  \quad   & \gamma \in[\frac {m}{m-1},\infty). \
		\end{cases}
	\end{equation}
\end{theorem}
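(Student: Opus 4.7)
The plan is to follow the same two-step strategy that the paper uses for Theorems \ref{theorem1}, \ref{theorem2} and \ref{theorem3}, with only cosmetic modifications to accommodate the fractional exponent $m$. First, via a standard smooth approximation and Littlewood--Paley decomposition, Theorem \ref{theorem4.6} reduces to a frequency-localized maximal inequality of the form
\[
\Bigl\|\sup_{t\in[0,1]}\bigl|P^m_{\gamma} f(\Gamma(x,t),t)\bigr|\Bigr\|_{L^{2}([-1,1])}\lesssim_\varepsilon \lambda^{s(\gamma)+\varepsilon}\,\|f\|_{L^2}
\]
for $\mathrm{supp}\,\widehat{f}\subset\{|\xi|\sim\lambda\}$, with $s(\gamma)$ matching the five pieces in the statement. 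The converse direction reduces, by a Nikishin--Stein style argument (as in Section \ref{chap3}), to constructing counterexamples that obstruct this maximal inequality for $s<s(\gamma)$.

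For the sufficiency, I would linearize the supremum via a measurable $x\mapsto t(x)\in[0,1]$ and apply the $TT^{*}$ trick exactly as in Section \ref{chap2}, reducing matters to the kernel
\[
K(x,y,t(x),t(y))=\int e^{i\bigl((\Gamma(x,t(x))-\Gamma(y,t(y)))\xi+(t(x)-t(y))|\xi|^m\bigr)}e^{-(t(x)^\gamma+t(y)^\gamma)|\xi|^m}\Psi(\xi/\lambda)\,d\xi.
\]
After the rescaling $\xi\to\lambda\xi$, one has $|\partial_\xi^{2}\varphi|\gtrsim\lambda^{m}|t(x)-t(y)|$, and the amplitude $G(\xi)=e^{-\lambda^{m}(t(x)^\gamma+t(y)^\gamma)|\xi|^m}\Psi(\xi)$ admits the same type of polynomial bound as in (\ref{supG}), namely $\sup_{|\xi|\sim 1}|G|+\int|G'|\lesssim_\beta(\lambda^{m}|t(x)-t(y)|^\gamma)^{-\beta}$. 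Splitting into the two regimes $|x-y|\lesssim|t(x)-t(y)|^\alpha$ and $|x-y|\gg|t(x)-t(y)|^\alpha$, with the second further divided according to whether $|x-y|\gg\lambda^{m-1}|t(x)-t(y)|$ (non-stationary phase) or $|x-y|\lesssim\lambda^{m-1}|t(x)-t(y)|$ (van der Corput), yields the analog of (\ref{estimate}) with free parameters $\beta_1,\beta_2\geq 0$. Integrating this bound in $y$ over $[-1,1]$ and optimizing $(\beta_1,\beta_2)$ in each of the five $\gamma$-intervals produces the Schur's-test bound in each regime; by symmetry the same bound holds in $x$, completing the sufficiency.

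For necessity, I would adapt the counterexamples of Theorems \ref{theorem3.1} and \ref{theorem3.2} to the fractional setting using the curve $\Gamma(x,t)=x-t^\alpha$. The trivial bound $s\geq 0$ on $\gamma\in(0,m\alpha)$ is produced by $t=0$ and a bump of unit Fourier width. For $\gamma\in[m\alpha,1)$ I would use the unshifted bump $\widehat{f}_R(\eta)=R^{-1}g(\eta/R)$ together with $t_x=x^{1/\alpha}$ on a set $A$ small enough that both $t_x R^{m}$ and $t_x^{\gamma}R^{m}$ stay $\lesssim 1$ on the support; this produces $s\geq \frac{1}{2}-\frac{m\alpha}{2\gamma}$. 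For the two higher regimes $\gamma\in[1,\frac{m(1-\alpha)}{m-1})$ and $\gamma\in[\frac{m(1-\alpha)}{m-1},\frac{m}{m-1})$ I would use the shifted bump $\widehat{f}_R(\eta)=R^{-1}g((\eta+R^{b})/R)$: after rescaling, the stationary-phase condition for the phase $(x-t^\alpha)\eta+t|\eta|^m$ at $\eta\sim-R^b$ is $x-t^\alpha-m t\,R^{b(m-1)}=0$, and choosing $b=\gamma/(m-1+\text{suitable correction})$ for the former range and $b=m/(m-1)$ for the latter gives sets of $x$ on which $\sup_t|P^m_\gamma f_R(\Gamma(x,t),t)|\gtrsim 1$ with measure at least $R^{-c}$ for the correct exponent $c$. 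The endpoint $s\geq 1/4$ on $\gamma\geq m/(m-1)$ comes from the latter construction directly.

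The principal obstacle will be the middle regime $\gamma\in[1,\frac{m(1-\alpha)}{m-1})$, where the threshold $\frac{2-m}{4}+\frac{m(1-2\alpha)}{4\gamma}$ is the genuinely new exponent peculiar to the band $\alpha\in[\tfrac{1}{2},\tfrac{1}{m})$ with $m\in(1,2)$; matching it on both sides requires a careful choice of the pair $(\beta_1,\beta_2)$ in the Schur's-test optimization and a carefully tuned shift $b=b(\alpha,m,\gamma)$ in the counterexample, and verifying that the cut-point $\frac{m(1-\alpha)}{m-1}$ (rather than $\frac{m}{m-2+2m\alpha}$ as in Theorem \ref{theorem4.5}) is exactly where the two counterexample families balance is the key computation that distinguishes this theorem from Theorem \ref{theorem4.5}.
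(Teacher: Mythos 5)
The paper itself does not carry out a proof of this theorem: it explicitly states at the start of Section~\ref{chap4} that the proofs are omitted because the method is the same as for Theorems~\ref{theorem1}--\ref{theorem3}. So there is no paper proof to compare against; what can be assessed is whether your outline correctly instantiates the machinery of Sections~\ref{chap2}--\ref{chap3}.

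Your sufficiency outline is sound. The rescaled phase has $|\partial_\xi^2\varphi|\gtrsim\lambda^m|t(x)-t(y)|$, the amplitude bound $\sup|G|+\int|G'|\lesssim_\beta(\lambda^m|t(x)-t(y)|^\gamma)^{-\beta}$ follows as in (\ref{supG}), and the non-stationary-phase threshold $|x-y|\gg\lambda^{m-1}|t(x)-t(y)|$ is the right replacement for $\lambda|t(x)-t(y)|$. Carrying out the integral $I(x)$ exactly as in (\ref{K}) with $\beta_1$ just below $\frac{2\alpha-1}{2\gamma}$ (so that the first-piece exponent in $y$ is just below $1$) and $\beta_2=\frac{1}{2\gamma}$ reproduces $I(x)\lesssim_\varepsilon\lambda^{\frac{2-m}{2}+\frac{m(1-2\alpha)}{2\gamma}+\varepsilon}$ for $\gamma\in[1,\frac{m(1-\alpha)}{m-1})$, and $\lambda^{\frac{m}{2}(1-\frac{1}{\gamma})+\varepsilon}$ for $\gamma\in[\frac{m(1-\alpha)}{m-1},\frac{m}{m-1})$, confirming the cut at $\frac{m(1-\alpha)}{m-1}$ that you flagged. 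So the piece you identified as the ``key computation'' does close on the upper-bound side by exactly the optimization you described.

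On the necessity side there is a concrete error in your parameter choices. Requiring simultaneously that the damping $t^\gamma R^{bm}\lesssim1$ and the quadratic phase $t R^{b(m-2)+2}\lesssim1$ forces $b=\frac{2\gamma}{m+\gamma(2-m)}$, not a $\gamma/(m-1+\text{correction})$; this value reduces to $b=\gamma$ at $m=2$ as in Theorem~\ref{theorem3.2}, and feeding it through the stationary-point set gives exactly $s\geq\frac{2-m}{4}+\frac{m(1-2\alpha)}{4\gamma}$ for $\gamma\in[1,\frac{m(1-\alpha)}{m-1})$ and $s\geq\frac{m}{4}(1-\frac{1}{\gamma})$ for $\gamma\in[\frac{m(1-\alpha)}{m-1},\frac{m}{m-1})$. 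For $\gamma\geq\frac{m}{m-1}$, however, your choice $b=\frac{m}{m-1}$ only yields $s\geq\frac{m-1}{2m}$, which is strictly less than $\frac{1}{4}$ when $m<2$; here the binding constraint becomes $t_{\max}R^{b(m-1)}\lesssim1$, and the correct choice is $b=2$ (agreeing with $\frac{m}{m-1}$ only at $m=2$), which gives $t_{\max}=R^{-2(m-1)}$, $|A|\sim1$, and $s\geq\frac{1}{4}$. So while your high-level plan is correct and follows the paper's method, the stated necessity parameters do not establish the endpoint $s\geq\frac{1}{4}$ and leave the middle-regime exponent unverified.
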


~\\

Binyu Wang, Department of Mathematics, Zhejiang University, Hangzhou 310058,  People’s Republic of China
		
		E-mail address: 12535002@zju.edu.cn
		
		Zhichao Wang, Department of Mathematics, Zhejiang University, Hangzhou 310058,  People’s Republic of China
		
		E-mail address: zhichaowang@zju.edu.cn

\end{document}